
\documentclass[a4paper]{cas-sc}
\usepackage[authoryear,longnamesfirst]{natbib}

\usepackage[utf8]{inputenc} 
\usepackage[T1]{fontenc}    

\usepackage{hyperref}
\usepackage{booktabs}       
\usepackage{nicefrac}       
\usepackage{microtype}      
\usepackage{xcolor}         

\usepackage{amsfonts,amsthm,array}

\usepackage{algorithm}
\usepackage[font=small]{caption}
\usepackage{subcaption}
\usepackage{color}
\usepackage{graphicx}
\usepackage{enumitem}
\usepackage{bbm}
\usepackage{threeparttable}
\usepackage{algorithmicx}
\usepackage{algpseudocode}

\usepackage{natbib}

\usepackage{mathtools}
\usepackage{latexsym}
\usepackage{epsfig}

\theoremstyle{plain}

\newtheorem{theorem}{Theorem}

\newtheorem{assumption}{Assumption}
\newtheorem{lemma}{Lemma}

\newtheorem{example}{Example}

\usepackage{dsfont}
\usepackage{pifont}


\usepackage{xspace}

\usepackage{amsmath}
\usepackage{amssymb}
\usepackage{bm}

\def\tsc#1{\csdef{#1}{\textsc{\lowercase{#1}}\xspace}}
\tsc{WGM}
\tsc{QE}
\tsc{EP}
\tsc{PMS}
\tsc{BEC}
\tsc{DE}


\begin{document}

\let\WriteBookmarks\relax
\def\floatpagepagefraction{1}
\def\textpagefraction{.001}

\shorttitle{Universal methods for  variational inequalities}

\shortauthors{A.~Klimza et al.}

\title [mode = title]{Universal methods for  variational inequalities: deterministic and stochastic cases}                      



%
\author[1]{Anton Klimza}[orcid=0009-0001-6839-0194]
\ead{klimza.aa@phystech.edu}

\author[2, 1, 3]{Alexander Gasnikov}[orcid=0000-0002-7386-039X]
\ead{gasnikov@yandex.ru}

\author[1, 4]{Fedor Stonyakin}[orcid=0000-0002-9250-4438]
\ead{fedyor@mail.ru}

\author[1]{Mohammad Alkousa}[orcid=0000-0001-5470-0182]
\ead{mohammad.alkousa@phystech.edu}

\affiliation[1]{organization={Moscow Institute of Physics and Technology, Department of Applied Mathematics and Computer Science},
    addressline={Institutsky lane 9}, 
    city={Dolgoprudny},
    postcode={141701}, 
    country={Russian Federation}}

\affiliation[2]{organization={Innopolis University},
    addressline={Universitetskaya Str.}, 
    city={Innopolis},
    postcode={420500}, 
    country={Russian Federation}}

\affiliation[3]{organization={Institute for Information Transmission Problems RAS},
    addressline={Bolshoy Karetny per. 19, build.1}, 
    city={Moscow},
    postcode={127051}, 
    country={Russian Federation}}

\affiliation[4]{organization={V.\,I.\,Vernadsky Crimean Federal University},
    addressline={4 Academician Vernadsky Avenue}, 
    city={Simferopol},
    postcode={295007}, 
    country={ Republic of Crimea, Russian Federation}}


\begin{abstract}
In this paper, we propose universal proximal mirror methods to solve the variational inequality problem with  H\"older-continuous operators in both deterministic and stochastic settings. The proposed methods automatically adapt not only to the oracle’s noise (in the stochastic setting of the problem) but also to the H\"older continuity of the operator without having prior knowledge of either the problem class or the nature of the operator information. We analyzed the proposed algorithms in both deterministic and stochastic settings and obtained estimates for the required number of iterations to achieve a given quality of a solution to the variational inequality. We showed that, without knowing the H\"older exponent and H\"older constant of the operators, the proposed algorithms have the least possible in the worst-case sense complexity for the considered class of variational inequalities. We also compared the resulting stochastic algorithm with other popular optimizers for the task of image classification.
\end{abstract}



\begin{keywords}
Variational inequality \sep H\"older continuous operator \sep Universal method \sep Proximal mirror method \sep Saddle point problem \sep Image classification
\end{keywords}

\maketitle

\section{Introduction} \label{sec:Introduction}

Variational inequalities (VIs) often arise in a wide variety of mathematical problems, such as optimization, optimal control, partial differential equations, mechanics, and finance. For a comprehensive overview, please refer to \cite{konnov2007equilibrium,facchinei2003finite,elliott1987variational,nagurney1993network}. Variational inequalities cover as a special case many optimization problems such as minimization problems, saddle point problems, and fixed point problem (see Examples \ref{ex:minproblem}, \ref{ex:saddleproblem} and \ref{ex:fixedproblem} below). They play a key role in solving equilibrium and complementarity problems \cite{facchinei2003finite,harker1990finite}, and have many applications in smooth and non-smooth optimization problems \cite{nesterov2005smooth}, also in saddle point problems that are an important part of the machine learning research \cite{arjovsky2017bottou,kniaz2021adversarial}, generative adversarial networks \cite{goodfellow2020generative}, supervised/unsupervised learning \cite{bach2012optimization,bach2008convex,esser2010general,joachims2005support,xu2004maximum}, reinforcement learning \cite{jin2020efficiently,omidshafiei2017deep}, adversarial training \cite{madry2017towards}, and generative models \cite{daskalakis2017training,goodfellow2014generative,gidel2018variational}.

Numerous researchers have dedicated their efforts to exploring theoretical aspects related to the existence and stability of solutions and constructing iterative methods for solving VIs. A significant contribution to the development of numerical methods for solving VIs was made in the 1970’s, when the extragradient method was proposed in \cite{korpelevich1976extragradient}. More recently, Nemirovski in his seminal work \cite{nemirovski2004prox} proposed a non-Euclidean variant of this method, called Mirror Prox algorithm, which can be applied to Lipschitz continuous operators.   Different methods with similar complexity were also proposed in \cite{auslender2005interior,gasnikov2019adaptive,monteiro2010complexity,nesterov2007dual,solodov1999hybrid}. Besides that, in \cite{nesterov2007dual}, Nesterov proposed a method for variational inequalities with a bounded variation of the operator, i.e., with a non-smooth operator. He also raised the question of whether it is possible to propose a method that automatically “adjusts to the actual level of smoothness of the current problem instance.” One of the goals of this paper is to propose such an algorithm. There is also extensive
literature on variations of extragradient method that avoid taking two steps or two gradient computations per iteration, and so on (see for example \cite{hsieh2019convergence,malitsky2020forward}). 

Furthermore, the rising significance of stochastic methods in large-scale computations has driven interest in stochastic extragradient methods. In \cite{juditsky2011solving}, the authors studied stochastic Mirror-Prox methods for VIs with compact convex feasible sets, and in \cite{mishchenko2020revisiting} it improved the stochastic Mirror-Prox method for VIs by using a single sample per iteration. In \cite{hsieh2019convergence}, the authors showed stochastic variants of the single-call type extragradient methods. Yet, with a constant stepsize, the iterates of these methods only converge to a neighborhood of the solution set. Diminishing stepsizes do ensure convergence, but they empirically slow down the performance. For more information about stochastic VIs, see \cite{beznosikov2023smooth} and references therein.

Similarly to a more general class of optimization problems with so-called H\"{o}lder-continuous functions, which cover both smooth and non-smooth problems, and for which authors proposed so-called universal algorithms that do not use the information about the H\"{o}lder exponent and H\"{o}lder constant \cite{kamzolov2021universal,baimurzina2019universal,ghadimi2019generalized,gasnikov2018universal,nesterov2015universal} (the main advantage of the universal method is that it adjusts itself to the smoothness of the problem and does not require input parameters), there are many works for VIs with a more general class of operators, called H\"{o}lder-continuous, which covers both cases of operators with bounded variation and Lipschitz-continuous operators. VIs with H\"{o}lder-continuous monotone operators were considered in \cite{nemirovski2004prox}, where a special choice of the stepsize for the Mirror Prox algorithm led to the optimal complexity for this class of problems \cite{nemirovskij1983problem}. The authors of \cite{dang2015convergence} consider variational inequalities with non-monotone H\"{o}lder-continuous operators. Both papers use the H\"{o}lder constant and exponent to define the stepsize of the proposed methods. In \cite{bach2019universal}, for solving monotonic stochastic variational inequalities, authors proposed a universal method based on the proximal mirror method. Essentially, the standard proximal mirror method is used, in which the Lipshitz constant is proposed to be selected in a special way, similar to the method used in Adagrad. However, this method is not fully adaptive because, like Adagrad, the step selection strategy makes significant use of information about the dimension of the problem. A fully adaptive method for solving smooth stochastic monotonic variational inequalities was constructed (with minor reservations) in the work \cite{iusem2019variance} and in \cite{stonyakin2022generalized,stonyakin2021inexact} for the inexact oracle.

Recently, in \cite{rodomanov2024universal} the authors propose a universal gradient descent for stochastic convex optimization problems. In this paper, we propose an analog of this method for deterministic and stochastic variational inequalities. We propose new universal algorithms, called ''Universal Mirror Proximal Method'', for variational inequality problem \eqref{prob:VIweak}, in deterministic and stochastic settings.

Our main contributions can be summarized as follows:

\subsection{ Contributions}
\begin{itemize}
    \item We propose universal algorithms for the variational inequality problem in both deterministic and stochastic settings. These algorithms automatically adapt not only to the oracle’s noise (in the stochastic setting of the problem) but also to the H\"older continuity of the operator without having prior knowledge of either the problem class or the nature of the operator information. 

    \item We analyze the proposed algorithm for the problems in the deterministic setting with H\"older continuous operators and conclude its optimal convergence rate.

    \item We analyze the proposed stochastic algorithm for the problems with H\"older continuous unbiased operators and conclude its optimal convergence rate.

    \item We conduct some numerical experiments to test the proposed stochastic algorithm and compare its performance with other algorithms, such as SGD and Adam for the task of image classification.    
    
\end{itemize}

\subsection{Paper Organization}
The paper consists of an introduction and 3 main sections.  In Sect. \ref{sect:ProblemStatement}, we mentioned the statement of the considered problem, some examples that can be covered by the problem under consideration, and the connected fundamental concepts. In Sect. \ref{sect:UPM}, we proposed new algorithms called ''Universal Mirror Proximal Method (UMP)'' to solve the variational inequality problem in deterministic and stochastic settings. We analyze the proposed algorithms for the problems with H\"older continuous operators (and unbiased when the setting of the problem is stochastic) and conclude its optimal convergence rate.  Sect. \ref{sect:NumericalExperiments} devoted to the numerical experiments, to test the proposed stochastic algorithm and show its performance by comparison with other algorithms such as Stochastic Gradient Descent (SGD) and Adam for the task of image classification.     

\section{Preliminaries and Problem Statement}\label{sect:ProblemStatement}
We will use $\langle x, y \rangle := \sum_{i = 1}^{n} x_i y_i$ to denote the standard inner product of $x = (x_1, \ldots, x_n), y = (y_1, \ldots, y_n) \in \mathbb{R}^n$, and  $\|x\| = \sqrt{\langle x,x \rangle}$ to denote the Euclidean norm. The operator $\mathbb{E}[\cdot]$ denotes the full mathematical expectation. We also use the notation $|x|_+ := \max \{0, x\}$ for any $x \in \mathbb{R}$. 

Let $Q \subseteq \mathbb{R}^n$ be a compact and convex subset, bounded by a constant $D \ge \max_{x, y \in Q} \|x - y\|$. In this paper, we consider the following Minty variational inequality
\begin{equation}\label{prob:VIweak}
    \text{Find} \quad x^* \in Q : \quad \langle g(x), x^* - x \rangle \leq 0 \quad \forall x \in Q,
\end{equation}
where $g: Q \longrightarrow \mathbb{R}^n$ is a continuous, monotone operator, i.e., 
\begin{equation}\label{eq:CondMonotone}
    \langle g(x) - g(y) , x - y \rangle \geq 0 \quad \forall x, y \in Q.
\end{equation}

We called that the operator $g$ satisfies the H\"older condition on $Q$ (or $g$ is a H\"older-continuous operator), if for some $\nu \in [0,1]$  and $L_{\nu} \geq 0$, it holds the following inequality
\begin{equation}\label{eq:GHoeldDef}
\|g(x) - g(y)\| \leq L_{\nu}\|x-y\|^{\nu} \quad \forall x,y \in Q,
\end{equation}
where we  refer to $\nu$ as H\"older exponent and to $L_\nu$ as H\"older constant. 

Under the assumption of continuity and monotonicity of the operator $g$, the problem \eqref{prob:VIweak} is equivalent to a Stampacchia \cite{giannessi1998minty} (or strong \cite{nesterov2007dual}) variational inequality, in which the goal is to find $x^* \in Q $ such that 
\begin{equation}\label{prob:VIstrong}
    \langle g(x^*), x^* - x \rangle \leq 0, \quad \forall x \in Q.
\end{equation}

To emphasize the extensiveness of the problem \eqref{prob:VIweak} (or \eqref{prob:VIstrong}), 
Let us consider three common use cases for VIs.

\begin{example}[Minimization problem]\label{ex:minproblem}
    Let us consider the minimization problem 
    \begin{equation}\label{min_problem}
        \min_{x \in Q} f(x), 
    \end{equation}
    and assume that $g(x) = \nabla f(x)$, where $\nabla f(x)$ denotes to the (sub)gradient of $f$ at $x$. Then, if $f$ is convex, it can be proved that $x^* \in Q$  is a solution to \eqref{prob:VIstrong} if and only if $x^* \in Q$ is a solution to \eqref{min_problem}.
\end{example}

\begin{example}[Saddle point problem]\label{ex:saddleproblem}  
 Let us consider the saddle point problem
\begin{equation}\label{minmax_problem}
        \min_{u \in Q_u}\max_{v \in Q_v}  f(u, v), 
    \end{equation}
\end{example}
and assume that $g(x) : = g(u, v) = \left(\nabla_u f(u,v), -\nabla_v f(u, v)\right)^{\top}$, where $Q = Q_u \times Q_v$ with $Q_u \subseteq \mathbb{R}^{n_u}, Q_v \subseteq \mathbb{R}^{n_v}$. Then if $f$ is convex in $u$ and concave in $v$, it can be proved that $x^* \in Q$ is a solution to \eqref{prob:VIstrong} if and only if $x^* = (u^*, v^*) \in Q$ is a solution to \eqref{minmax_problem}. 

\begin{example}[Fixed point problem]\label{ex:fixedproblem}
Let us consider the fixed point
problem
\begin{equation}\label{fixed_prob}
    \text{find} \;\; x^* \in Q \;\; \text{such that} \quad F(x^*) = x^*,
\end{equation}
where $F: \mathbb{R}^n \longrightarrow \mathbb{R}^n $ is an operator. By taking $g(x)  = x - F(x)$, it can be proved that $x^* \in Q = \mathbb{R}^n$ is a solution to \eqref{prob:VIstrong} if $g(x^*) = 0$, i.e., $x^*$ is a solution to \eqref{fixed_prob}. 
\end{example}

Following \cite{nesterov2007dual,antonakopoulos2020adaptive}, to assess the quality of a candidate solution $\widehat{x}$, we use the following restricted  gap (or merit) function
\begin{equation}\label{eq:gap}
\operatorname{Gap}(\widehat{x}) = \max_{u \in Q} \langle g(u), \widehat{x}  - u \rangle .
\end{equation}

Thus, our goal is to find an approximate solution to the problem, that is, a point $\widehat{x} \in Q$ such that the following inequality holds
\begin{equation}\label{eq:Appr}
    \operatorname{Gap}(\widehat{x}) = \max_{u \in Q} \langle g(u), \widehat{x}  - u \rangle \leq \varepsilon,
\end{equation}
for some $\varepsilon >0$. 

As already mentioned, \cite{nemirovski2004prox} proposed Mirror Prox algorithm under the assumption of $L_1$-Lipschitz continuity of the operator, i.e., $g$ satisfies \eqref{eq:GHoeldDef} with $\nu=1$ and $L_1$. This method has complexity $O\left( L_1 D^2 / \varepsilon \right)$, where $D$ characterizes the diameter of the set $Q$ and $\varepsilon$ is the desired accuracy. By complexity we mean the number of iterations of an algorithm to find a point $\widehat{x} \in Q$ such that \eqref{eq:Appr} holds. For the case of variational inequalities with bounded variation of the operator $g$, i.e., $g$ satisfying \eqref{eq:GHoeldDef} with $\nu=0$ and $L_0$ \cite{nesterov2007dual} proposed a method with complexity $O\left( L_0^2 D^2 / \varepsilon^2 \right)$. The method for variational inequalities with H\"older-continuous monotone operator \cite{nemirovski2004prox} has the complexity $O\left(\left( L_{\nu} / \varepsilon\right)^{2/ (1+\nu)} D^2 \right),$
which is optimal for for the case of $\nu = 1$ and for the case of $\nu = 0$ \cite{nemirovskij1983problem,ouyang2021lower}.

For the stochastic setting of the problem \eqref{prob:VIweak} (or \eqref{prob:VIstrong}), we consider the following operator
\begin{equation}\label{stoc_g}
    g(x) = \mathbb{E}_{\xi } \left[g(x, \xi)\right],  
\end{equation}
where $\xi$ is a random variable, and $g(\cdot, \xi) : Q \longrightarrow \mathbb{R}^n$ is a stochastic operator, which satisfies the following assumption.

\begin{assumption}[Bounded variance]\label{assump_bv}
For all $\xi$ with any distribution, and $x \in Q$, the unbiased operator $g(\cdot, \xi)$ has uniformly bounded variance, i.e., it holds the following inequality
\begin{equation}
    \mathbb{E}_{\xi} \left[\|g(x, \xi) - g(x)\|^2\right] \le \sigma^2,
\end{equation}
for some $\sigma > 0$. 
\end{assumption}

\section{Universal proximal mirror method}\label{sect:UPM}

In this section, for problem \eqref{prob:VIweak}, we introduce new algorithms called Universal Mirror Proximal (UMP) for both deterministic and stochastic settings. For the deterministic setting, the algorithm is listed below as Algorithm \ref{alg:UMP}, whereas for the stochastic setting, the algorithm is listed as Algorithm \ref{alg:StocUMP}. These algorithms automatically adapt not only to the oracle’s noise (in the stochastic setting of the problem) but also to the H\"older continuity of the operator $g$ without having prior knowledge of either the problem class or the nature of the operator information, i.e., without knowing neither the H\"older exponent $\nu$, nor the corresponding H\"older constant $L_{\nu}$.

\subsection{Analysis for deterministic setting}

In this subsection, we provide an analysis for Algorithm \ref{alg:UMP}, when the setting of the problem \eqref{prob:VIweak} is deterministic.

\begin{algorithm}[htp]
\caption{Universal Mirror Proximal Method (UMP) (for the deterministic setting of the problem).}
\label{alg:UMP}
\textbf{Inputs:} $z_0 = \arg\min\limits_{u \in Q} d(u),\ L_0 = \|g(z_0)\|$, and diameter $D>0$.  
\hspace*{\algorithmicindent}
\begin{algorithmic}[1]
\For{$k = 0, 1,  \ldots$}
\State $w_k = \arg\min\limits_{x \in Q} \left(\left\langle g(z_k), x  - z_k \right\rangle  +  \frac{L_k}{2} \left\|z_k - x\right\|^2 \right),$ 
\State  $z_{k+1} = \arg\min\limits_{x \in Q} \left(\left\langle g(w_k), x - w_k\right\rangle  +  \frac{L_k}{2} \left\|z_k - x\right\|^2 \right),$
\State $L_{k+1} = L_k + \max\left\{0, \frac{2\left\langle g(w_k), w_k - z_{k+1}\right\rangle  - L_k \left \|z_k - z_{k+1}\right\|^2}{D^2 + \left \|z_k - z_{k+1}\right\|^2} \right\}.$
\EndFor
\end{algorithmic}
\end{algorithm}

At the first, let us prove the following lemma.

\begin{lemma}\label{lemma1}
Let $Q \subseteq \mathbb{R}^n$ be a convex set and $g : Q \longrightarrow \mathbb{R}^n$ be an operator. The function $h_k: Q \longrightarrow \mathbb{R}$, which defined on $Q$ as
\begin{equation}\label{eq:operator-h}
    h_k(x) = \left\langle g(w), x - w \right\rangle  +  \frac{L_k}{2}\|z - x\|^2 \quad \forall x, w, z \in Q,
\end{equation}
is $L_k$-strongly convex, for some $L_k > 0$.
\end{lemma}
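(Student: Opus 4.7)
The plan is to decompose $h_k$ into its affine and quadratic pieces and invoke the standard fact that adding a convex function to an $L_k$-strongly convex function yields an $L_k$-strongly convex function. Concretely, write
\[
h_k(x) = \underbrace{\left\langle g(w), x - w \right\rangle}_{=: \ell(x)} + \underbrace{\frac{L_k}{2}\|z - x\|^2}_{=: q(x)},
\]
where $w, z \in Q$ are treated as fixed parameters.

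First I would observe that $\ell$ is affine in $x$, hence convex (with strong convexity modulus $0$). Then I would verify that $q$ is $L_k$-strongly convex with respect to the Euclidean norm. The cleanest way is to note that $q(x) = \tfrac{L_k}{2}\langle x-z, x-z\rangle$ is a quadratic whose Hessian is $\nabla^2 q(x) = L_k I \succeq L_k I$, which is the definition of $L_k$-strong convexity. Alternatively, for any $x, y \in Q$ and $\lambda \in [0,1]$, direct expansion gives
\[
q\bigl(\lambda x + (1-\lambda)y\bigr) = \lambda q(x) + (1-\lambda) q(y) - \frac{L_k}{2}\lambda(1-\lambda)\|x - y\|^2,
\]
which is exactly $L_k$-strong convexity (in fact with equality, because $q$ is quadratic).

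Finally, I would add the two inequalities: for any $x, y \in Q$ and $\lambda \in [0,1]$,
\[
\ell\bigl(\lambda x + (1-\lambda)y\bigr) = \lambda \ell(x) + (1-\lambda)\ell(y),
\]
and combining with the displayed identity for $q$ yields
\[
h_k\bigl(\lambda x + (1-\lambda)y\bigr) \leq \lambda h_k(x) + (1-\lambda) h_k(y) - \frac{L_k}{2}\lambda(1-\lambda)\|x-y\|^2,
\]
which is the definition of $L_k$-strong convexity of $h_k$ on $Q$.

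There is no real obstacle here: the statement is essentially a restatement of the fact that $\tfrac{L_k}{2}\|\cdot - z\|^2$ is $L_k$-strongly convex and affine perturbations preserve the strong convexity modulus. The only point worth being careful about is that the parameters $w$ and $z$ appearing in the definition of $h_k$ must be held fixed during the verification, so that only the dependence on $x$ matters; the convexity of $Q$ is needed solely so that the convex combination $\lambda x + (1-\lambda)y$ lies in the domain.
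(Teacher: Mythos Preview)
Your proposal is correct and follows essentially the same approach as the paper: the paper also verifies the strong convexity inequality directly by expanding $h_k(\alpha x + (1-\alpha)y)$, and the computation reduces exactly to your displayed identity for the quadratic part $q$, with the affine part $\ell$ contributing nothing to the modulus. Your decomposition into $\ell + q$ just organizes that same calculation a bit more cleanly.
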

\begin{proof}
Let $\alpha \in [0,1]$ and $x, y \in Q$. We have 
\begin{align*}
    h_k(\alpha x + (1 - \alpha) y) & = \left\langle g(w), \alpha x + (1 - \alpha) y - w \right\rangle  +  \frac{L_k}{2}\|\alpha x + (1 - \alpha) y - z\|^2 
    \\& = \alpha \left\langle g(w), x - w \right\rangle  + (1 - \alpha) \left\langle g(w), y - w \right\rangle  +  \frac{L_k}{2}\|\alpha x + (1 - \alpha) y - z\|^2
    \\& = \alpha h_k(x) + (1 - \alpha) h_k(y) +  \frac{L_k}{2}\|\alpha x + (1 - \alpha) y - z\|^2 - \alpha  \frac{L_k}{2}\|x - z\|^2 - (1 - \alpha)  \frac{L_k}{2}\|y - z\|^2
    \\& = \alpha h_k(x) + (1 - \alpha) h_k(y) + L_k \frac{\alpha^2 - \alpha}{2} \|x\|^2 + L_k \frac{(1 - \alpha)^2 - (1 - \alpha)}{2} \|y\|^2 + L_k \alpha(1 - \alpha) \left\langle x, y \right\rangle 
    \\& = \alpha h_k(x) + (1 - \alpha) h_k(y) - \frac{L_k \alpha(1 - \alpha)}{2}\|x - y\|^2.
\end{align*}
Thus, we get the following inequality
\[
    h_k \left(\alpha x + (1 - \alpha) y\right) \le \alpha h_k(x) + (1 - \alpha) h_k(y) - \frac{L_k \alpha(1 - \alpha)}{2}\|x - y\|^2. 
\]
This means that the function $h_k$ is $L_k$-strongly convex.
\end{proof}

Now, for $z_k$ and $w_k$ in Algorithm \ref{alg:UMP}, for every $k \geq 0$, let us set 
\begin{equation*}
  h_k(x) =  \left\langle g(w_k), x - w_k \right\rangle  +  \frac{L_k}{2} \|z_k - x\|^2 \quad \forall x \in Q.
\end{equation*}
Then, from item 3 in Algorithm \ref{alg:UMP}, we have
\begin{equation}\label{eq_000}
    z_{k+1} = \arg\min\limits_{x \in Q} h_k(x) \quad \text{for} \quad w = w_k,\ z = z_k,
\end{equation}
and, by Lemma \ref{lemma1} and \eqref{eq_000} we obtain the following inequality
\begin{equation}\label{eq_111}
    h_k(y) \ge h_k(z_{k+1}) + \frac{L_k }{2}\|y - z_{k+1}\|^2, \quad \forall y \in Q.
\end{equation}
Thus, for $w_k, z_k, \, \forall k \geq 0$ in Algorithm \ref{alg:UMP}, from \eqref{eq_111} for any $y \in Q$, we get
$$
    \left\langle g(w_k), y - w_k \right\rangle  +  \frac{L_k}{2}\|z_k - y\|^2 \ge \left\langle g(w_k), z_{k+1} - w_k \right\rangle  +  \frac{L_k}{2}\|z_k - z_{k+1}\|^2 +  \frac{L_k}{2}\|z_{k+1} - y\|^2,
$$
i.e., 
\begin{equation}\label{eq_222}
    \left\langle g(w_k), w_k - y\right\rangle  \le \left\langle g(w_k), w_k - z_{k+1} \right\rangle  -  \frac{L_k}{2}\|z_k - z_{k+1}\|^2 - \frac{L_k}{2}\|z_{k+1} - y\|^2 +  \frac{L_k}{2}\|z_k - y\|^2.
\end{equation}

By adding $ \frac{L_{k+1}}{2}\|z_{k+1} - y\|^2$ to the both sides of \eqref{eq_222}, we get
\begin{equation}\label{eq_2228}
\begin{aligned}
    \left\langle g(w_k), w_k - y \right\rangle  +  \frac{L_{k+1}}{2}\|z_{k+1} - y\|^2 & \le \left\langle g(w_k), w_k - z_{k+1} \right\rangle  - \frac{L_k }{2}\|z_k - z_{k+1}\|^2 +  \frac{L_{k+1} - L_k}{2}\|z_{k+1} - y\|^2
    \\& \quad +  \frac{ L_k}{2}\|z_k - y\|^2.
\end{aligned}
\end{equation}

Since $D \ge \max_{x, y \in Q} \|x - y\|$, and $L_{k+1} \ge L_k$ (see item 4 in Algorithm \ref{alg:UMP}) we get
\begin{align*}
    \left\langle  g(w_k), w_k - y \right\rangle  +  \frac{L_{k+1}}{2}\|z_{k+1} - y\|^2 & \le 
    \left\langle  g(w_k), w_k - z_{k+1} \right\rangle  -  \frac{L_k}{2}\|z_k - z_{k+1}\|^2 +  \left(L_{k+1} - L_k\right)\frac{ D^2}{2} 
    \\& \quad  +   \frac{L_k}{2}\|z_k - y\|^2.
\end{align*}
We will look for $L_{k+1}$ such that
\begin{equation}\label{eq_n1}
    \left(L_{k+1} - L_k\right) \frac{D^2}{2} = \left| \left\langle  g(w_k), w_k - z_{k+1} \right\rangle  -  \frac{L_k}{2}\|z_k - z_{k+1}\|^2\right|_+.
\end{equation}
Then, we get
\begin{equation}\label{eq_8585}
    \left\langle g(w_k), w_k - y \right\rangle  +  \frac{L_{k+1}}{2}\|z_{k+1} - y\|^2 \le D^2 \left(L_{k+1} - L_k\right) +  \frac{L_k}{2}\|z_k - y\|^2.
\end{equation}

Through the telescopic sum, we get
$$
    \sum\limits_{i=0}^k \left\langle g(w_i), w_i - y \right\rangle  +  \frac{L_{k+1}}{2}\|z_{k+1} - y\|^2 \le D^2 \left(L_{k+1} - L_0\right) + \frac{L_0}{2}\|z_0 - y\|^2.
$$
Thus, we have 
\begin{equation}\label{eq_4444}
    \frac{1}{k}\sum_{i=0}^k \left\langle g(w_i), w_i - y \right\rangle  \le \frac{D^2L_{k+1}}{k}.
\end{equation}

Let us set $\widehat{w} := \frac{1}{k}\sum_{i=0}^k w_i$, then we have
\begin{equation}\label{eq_5555}
    \operatorname{Gap}(\widehat{w}) = \max\limits_{y \in Q} \left\langle g(y), \widehat{w} - y \right\rangle  = \max\limits_{y \in Q} \frac{1}{k}\sum\limits_{i=0}^k \left( \left\langle g(w_i), w_i - y \right\rangle  + \left\langle g(y) - g(w_i), w_i - y \right\rangle  \right).
\end{equation}

Since $g$ is a monotone operator, then $\left\langle g(y) - g(w_i), w_i - y \right\rangle  \le 0$, and as a result from \eqref{eq_4444} and \eqref{eq_5555}, we get 
$$
    \operatorname{Gap}(\widehat{w}) \le \frac{D^2L_{k+1}}{k}.
$$

We have obtained an estimate of the convergence rate of the method for a given choice of $L_{k+1}$. 

Let us find an upper bound to $L_{k+1}$ to formulate the convergence rate depending on $k$. Using \eqref{eq_8585} we could not get a sufficient upper bound for $L_{k+1}$, thus instead \eqref{eq_n1} we take a slightly different formula, which allows us to estimate the upper bound for $L_{k+1}$ but we will get a slightly worse estimation for $\operatorname{Gap}(\widehat{w})$. This formula has the following form
\begin{equation}\label{eq_5858}
    \left(L_{k+1} - L_k\right)\frac{D^2 }{2}= \left|\left\langle g(w_k), w_k - z_{k+1} \right\rangle  -  \frac{L_{k+1}}{2}\|z_k - z_{k+1}\|^2\right|_+.
\end{equation}
Thus, for any $y \in Q$, we get
\begin{align*}
    \left\langle g(w_k), w_k - y\right\rangle  +  \frac{L_{k+1}}{2}\|z_{k+1} - y\|^2 & \le \left(D^2 + \|z_{k+1} - z_k\|^2\right) \left( L_{k+1} - L_k\right) +  \frac{L_k}{2}\|z_k - y\|^2 
    \\& \le 2D^2 \left(L_{k+1} - L_k\right) +  \frac{L_k}{2}\|z_k - y\|^2.
\end{align*}
From this, in a similar way as in the previous, we get the following estimate
\begin{equation}\label{eq:eatim_gap}
    \operatorname{Gap}(\widehat{w}) \le \frac{2D^2L_{k+1}}{k}.
\end{equation}

\begin{lemma}\label{lemma2}
Assume that the operator $g$ is monotone and  H\"older continuous with constant $L_{\nu}$ for some $\nu \in (0,1)$, given on a convex compact subset $Q$ bounded by a constant $D$. Then for problem (1), by Algorithm \ref{alg:UMP}, the following inequality holds
$$ 
    L_{k+1} \le \left(\frac{8k}{D^2}\right)^{\frac{1-\nu}{2}} L_{\nu}.
$$
\end{lemma}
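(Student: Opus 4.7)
The plan is to combine the line-search identity \eqref{eq_5858} with the H\"older condition and the first-order optimality of $w_k$ to get a controllable per-iteration bound on $(L_{k+1}-L_k)L_k^{(1+\nu)/(1-\nu)}$, and then to telescope it into a bound on $L_{k+1}^{2/(1-\nu)}$.

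The first ingredient is the identity $(L_{k+1}-L_k)\tfrac{D^2}{2} = \langle g(w_k), w_k - z_{k+1}\rangle - \tfrac{L_{k+1}}{2}\|z_k-z_{k+1}\|^2$ from \eqref{eq_5858}; the case where the positive part vanishes leaves $L_{k+1}=L_k$ and is trivial. I would then split $\langle g(w_k), w_k-z_{k+1}\rangle = \langle g(w_k)-g(z_k), w_k-z_{k+1}\rangle+\langle g(z_k), w_k-z_{k+1}\rangle$ and bound the two summands separately: the first by Cauchy--Schwarz together with \eqref{eq:GHoeldDef}, giving at most $L_\nu\|w_k-z_k\|^\nu\|w_k-z_{k+1}\|$; the second via the first-order optimality of $w_k$ (line 2 of Algorithm~\ref{alg:UMP}) tested against $z_{k+1}\in Q$, combined with the polarization identity $2\langle w_k-z_k, z_{k+1}-w_k\rangle=\|z_k-z_{k+1}\|^2-\|w_k-z_k\|^2-\|w_k-z_{k+1}\|^2$. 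Using $L_{k+1}\ge L_k$ to cancel the $\|z_k-z_{k+1}\|^2$ contributions, and then the elementary Young inequality $ab\le a^2/(2L_k)+L_kb^2/2$ to absorb the $\|w_k-z_{k+1}\|^2$ piece, leaves $(L_{k+1}-L_k)\tfrac{D^2}{2} \le \tfrac{L_\nu^2}{2L_k}\|w_k-z_k\|^{2\nu}-\tfrac{L_k}{2}\|w_k-z_k\|^2$. Maximizing the right-hand side in $t:=\|w_k-z_k\|\ge 0$ by one-variable calculus then yields the per-iteration estimate $(L_{k+1}-L_k)L_k^{(1+\nu)/(1-\nu)} \le (1-\nu)\nu^{\nu/(1-\nu)}L_\nu^{2/(1-\nu)}/D^2$.

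It remains to telescope. Setting $p := 2/(1-\nu)$ and $B := (1-\nu)\nu^{\nu/(1-\nu)}L_\nu^{p}/D^2$, the per-step inequality reads $(L_{i+1}-L_i)L_i^{p-1} \le B$. Applying the convexity bound $L_{i+1}^p - L_i^p \le p\,L_{i+1}^{p-1}(L_{i+1}-L_i)$ together with the ratio estimate $L_{i+1}/L_i \le 1 + B/L_i^p \le 2$ once $L_i^p \ge B$, one obtains $L_{i+1}^p - L_i^p \le p\cdot 2^{p-1}B$ throughout the ``stable phase''. Summing from $0$ to $k$, and absorbing the short initial ``warmup'' needed to push $L_i$ past the threshold $B^{1/p}=O(L_\nu/D^{1-\nu})$ into the leading constant, delivers $L_{k+1}^p \le 8kL_\nu^p/D^2$, which raised to the $1/p$th power is the claim. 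The principal obstacle here is precisely this telescoping: the per-iteration bound features $L_i^{p-1}$ (the left endpoint) while convexity naturally produces $L_{i+1}^{p-1}$ (the right endpoint), so one must quantify the single-step ratio $L_{i+1}/L_i$ to bridge them; the overhead factor $2^{p-1}$ plus the warmup contribution is what inflates the leading constant to the stated value $8$.
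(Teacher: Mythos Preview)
Your per-iteration analysis is sound: splitting $\langle g(w_k),w_k-z_{k+1}\rangle$ via H\"older plus the optimality of $w_k$, then applying Young's inequality to reduce to the one-variable function $t\mapsto \tfrac{L_\nu^2}{2L_k}t^{2\nu}-\tfrac{L_k}{2}t^2$, does give exactly
\[
(L_{k+1}-L_k)\,L_k^{\,(1+\nu)/(1-\nu)} \;\le\; \frac{(1-\nu)\,\nu^{\nu/(1-\nu)}}{D^2}\,L_\nu^{\,2/(1-\nu)}.
\]
The genuine gap is in the telescoping. With $p=2/(1-\nu)$ your inequality has $L_k^{p-1}$ (the left endpoint) on the left, so you must pay a factor $(L_{i+1}/L_i)^{p-1}\le 2^{p-1}=2^{(1+\nu)/(1-\nu)}$ at every step. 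This factor is \emph{unbounded} as $\nu\to 1$: already at $\nu=0.6$ one has $p\cdot 2^{p-1}B>8\,L_\nu^p/D^2$, and for $\nu=0.9$ the constant exceeds $10^5$. Hence the route you describe yields $L_{k+1}\le \bigl(C(\nu)\,k/D^2\bigr)^{(1-\nu)/2}L_\nu$ with $C(\nu)=(4\nu^\nu)^{1/(1-\nu)}$ (plus warmup), not the uniform constant $8$ stated in the lemma. The sentence ``the overhead factor $2^{p-1}$ \ldots\ is what inflates the leading constant to the stated value $8$'' is therefore incorrect as written.

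The paper circumvents this precisely by \emph{not} discarding the term $\tfrac{L_k-L_{k+1}}{2}\|z_k-z_{k+1}\|^2$ and \emph{not} freezing the quadratic penalty at $L_k$. Instead it moves all the $(L_{k+1}-L_k)$-multiples of squared distances to the left, obtaining
\[
\frac{L_{k+1}-L_k}{2}\,R \;\le\; L_\nu\|w_k-z_k\|^\nu\|w_k-z_{k+1}\| - \frac{L_{k+1}}{2}\bigl(\|w_k-z_k\|^2+\|w_k-z_{k+1}\|^2\bigr),
\]
with $R=D^2+\|z_k-z_{k+1}\|^2-\|w_k-z_k\|^2-\|w_k-z_{k+1}\|^2\ge D^2/2$ by the parallelogram identity. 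Now the penalty carries $L_{k+1}$, so after the (two-variable) maximization the per-step bound reads $(p+1)L_{k+1}^{p}(L_{k+1}-L_k)\le \alpha$ with $p=(1+\nu)/(1-\nu)$, and the elementary integral estimate $(p+1)L_{k+1}^{p}(L_{k+1}-L_k)\ge L_{k+1}^{p+1}-L_k^{p+1}$ telescopes directly, with no ratio control and no $\nu$-dependent blow-up. If you want to keep your one-variable reduction, the fix is the same rearrangement: retain the $\|z_k-z_{k+1}\|^2$ term, trade $L_k$ for $L_{k+1}$ in the two negative quadratics at the price of shrinking $D^2$ to $R\ge D^2/2$, and only then apply Young/maximization with coefficient $L_{k+1}$.
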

\begin{proof} The recalculation of $L_{k+1}$ in Algorithm \ref{alg:UMP} is performed using \eqref{eq_5858}.  If $\left\langle g(w_k), w_k - z_{k+1} \right\rangle  -  \frac{L_{k+1}}{2}\|z_k - z_{k+1}\|^2  < 0$ (see \eqref{eq_5858}), then we get $L_{k+1} = L_k$, otherwise
\begin{equation}\label{eq_0101}
    \left(L_{k+1} - L_k\right)\frac{ D^2 }{2} = \left\langle g(w_k), w_k - z_{k+1} \right\rangle  -\frac{ L_{k+1} }{2}\|z_k - z_{k+1}\|^2.
\end{equation}
    
By using \eqref{eq:GHoeldDef}, we get 
$$
    \left\langle g(w_k) - g(z_k), w_k - z_{k+1} \right\rangle \le \left| \left\langle g(w_k) - g(z_k), w_k - z_{k+1} \right\rangle \right| \le L_{\nu}\|w_k - z_k\|^{\nu}\|w_k - z_{k+1}\|.
$$
Thus,
\begin{equation}\label{eq_2123}
    \left\langle g(w_k), w_k - z_{k+1} \right\rangle  \le \left\langle g(z_k), w_k - z_{k+1} \right\rangle  + L_{\nu}\|w_k - z_k\|^{\nu}\|w_k - z_{k+1}\|.
\end{equation}

From item 2 in Algorithm \ref{alg:UMP}, we have
\begin{equation}\label{eq_0008}
   w_k = \arg\min\limits_{x \in Q} h_k(x) \quad \text{for} \quad w = z = z_k,
\end{equation}
and, by Lemma \ref{lemma1} and \eqref{eq_0008} we obtain the following inequality
\begin{equation}\label{eq_1118}
    h_k(y) \ge h_k(w_k) + \frac{L_k }{2}\|y - w_k\|^2, \quad \forall y \in Q.
\end{equation}
Thus, for $w_k, z_k, \, \forall k \geq 0$ and $y = z_{k+1}$ in Algorithm \ref{alg:UMP}, from \eqref{eq_1118}, we get
$$
    \left\langle g(z_k), z_{k+1} - z_k \right\rangle  +  \frac{L_k}{2}\|z_k - z_{k+1}\|^2 \ge \left\langle g(z_k), w_k - z_k \right\rangle  +  \frac{L_k}{2}\|z_k - w_k\|^2 +  \frac{L_k}{2}\|z_{k+1} - w_k\|^2,
$$
i.e,
$$
\left\langle g(z_k), w_k - z_{k+1} + z_k - w_k \right\rangle  -  \frac{L_k}{2}\|z_k - z_{k+1}\|^2 \le \left\langle g(z_k), z_k - w_k \right\rangle  -  \frac{L_k}{2}\|z_k - w_k\|^2 -  \frac{L_k}{2}\|z_{k+1} - w_k\|^2.
$$
Thus, we get
\begin{equation}\label{eq_7575}
    \left\langle g(z_k), w_k - z_{k+1} \right\rangle  \le  \frac{L_k}{2}\|z_k - z_{k+1}\|^2 -  \frac{L_k}{2}\|z_k - w_k\|^2 -  \frac{L_k}{2}\|z_{k+1} - w_k\|^2.
\end{equation}

From \eqref{eq_2123} and \eqref{eq_7575}, we find
\begin{equation}\label{eq_666}
    \left\langle g(w_k), w_k - z_{k+1} \right\rangle  \le L_{\nu}\|w_k - z_k\|^{\nu}\|w_k - z_{k+1}\| +  \frac{L_k}{2}\|z_k - z_{k+1}\|^2 -  \frac{L_k}{2}\|z_k - w_k\|^2 -  \frac{L_k}{2}\|z_{k+1} - w_k\|^2.
\end{equation}
Thus, from \eqref{eq_0101} and \eqref{eq_666}, we get the following
$$
    \left(L_{k+1} - L_k\right)\frac{D^2}{2} \le L_{\nu}\|w_k - z_k\|^{\nu}\|w_k - z_{k+1}\| +  \frac{L_k - L_{k+1}}{2}\|z_k - z_{k+1}\|^2 -  \frac{L_k}{2}\|z_k - w_k\|^2 -  \frac{L_k}{2}\|z_{k+1} - w_k\|^2.
$$
Thus, 
\begin{align*}
    \frac{L_{k+1} - L_k}{2} \left(D^2 + \|z_k - z_{k+1}\|^2 - \|z_k - w_k\|^2 - \|z_{k+1} - w_k\|^2 \right) & \le L_{\nu}\|w_k - z_k\|^{\nu}\|w_k - z_{k+1}\| 
    \\& \quad -  \frac{L_{k+1}}{2}\left( \|z_k - w_k\|^2 + \|z_{k+1} - w_k\|^2 \right). 
\end{align*}

Let's denote $R = D^2 + \|z_k - z_{k+1}\|^2 - \|z_k - w_k\|^2 - \|z_{k+1} - w_k\|^2$. Then, we have
\begin{equation}\label{eq_03021}
    \frac{L_{k+1} - L_k }{2}R \le L_{\nu}\|w_k - z_k\|^{\nu}\|w_k - z_{k+1}\| -  \frac{L_{k+1}}{2}\left( \|z_k - w_k\|^2 + \|z_{k+1} - w_k\|^2 \right).
\end{equation}

Let us define the function $f(x,y) := L_{\nu}x^{\nu}y - \frac{1}{2}L_{k+1} (x^2 + y^2)$ on $\mathbb{R}^2$, which is concave for any $L_{\nu} > 0, L_{k+1} > 0$ and $\nu \in (0, 1)$. This function attains its maximal value at the point $(x, y ) \in \mathbb{R}^2$, such that 
$$
    \frac{\partial f}{\partial x} = \nu L_{\nu}x^{\nu - 1}y - L_{k+1} x = 0, \quad \text{and} \quad \frac{\partial f}{\partial y} = L_{\nu}x^{\nu} - L_{k+1} y = 0.
$$

From these equations, we get
$$
 y = \frac{L_{k+1} x^{2 - \nu}}{\nu L_{\nu}}  \quad \text{and} \quad  y = \frac{L_{\nu}x^{\nu}}{L_{k+1}}. 
$$
Thus, we have
$$
    \frac{L_{k+1} x^{2 - \nu}}{\nu L_{\nu}} = \frac{L_{\nu}x^{\nu}}{L_{k+1}} \quad  \Longrightarrow \quad x = \left(\frac{\sqrt{\nu} L_{\nu}}{L_{k+1}}\right)^{\frac{1}{1 - \nu}}, \quad 
    y = \left(\frac{\nu^{\nu/2} L_{\nu}}{L_{k+1}}\right)^{\frac{1}{1 - \nu}},
$$
and therefore we get
\begin{align}
    f(x, y) & \le L_{\nu}\left(\frac{\sqrt{\nu} L_{\nu}}{L_{k+1}}\right)^{\frac{\nu}{1 - \nu}}\left(\frac{\nu^{\nu/2} L_{\nu}}{L_{k+1}}\right)^{\frac{1}{1 - \nu}} - \frac{1}{2}L_{k+1} \left( \left(\frac{\sqrt{\nu} L_{\nu}}{L_{k+1}}\right)^{\frac{2}{1 - \nu}} + \left(\frac{\nu^{\nu/2} L_{\nu}}{L_{k+1}}\right)^{\frac{2}{1 - \nu}} \right) \nonumber
    \\& = \frac{{\nu}^{\frac{\nu}{1 - \nu}} L_{\nu}^{\frac{2}{1 - \nu}}}{L_{k+1}^{\frac{1 + \nu}{1 - \nu}}} \left(1 - \frac{\nu^{1/\nu}}{2} - \frac{1}{2} \right) = \frac{{\nu}^{\frac{\nu}{1 - \nu}}\left(1 - {\nu}^{1/\nu}\right) L_{\nu}^{\frac{2}{1 - \nu}}}{2 L_{k+1}^{\frac{1 + \nu}{1 - \nu}}}. \label{gtht}
\end{align}

By setting $x := \|w_k - z_k\|, y := \|w_k - z_{k+1}\|$ (in $f(x,y)$) and $p = \frac{1 + \nu}{1 - \nu}$,  then from \eqref{gtht} we find the following inequality 
$$
L_{\nu}\|w_k - z_k\|^{\nu}\|w_k - z_{k+1}\| -  \frac{L_{k+1}}{2}\left( \|z_k - w_k\|^2 + \|z_{k+1} - w_k\|^2 \right) \leq  \frac{{\nu}^{\frac{\nu}{1 - \nu}}\left(1 - {\nu}^{1/\nu}\right) L_{\nu}^{p+1}}{2 L_{k+1}^{p}}. 
$$
Thus, from \eqref{eq_03021} we find
$$
   \frac{L_{k+1} - L_k }{2}R \le \frac{{\nu}^{\frac{\nu}{1 - \nu}} \left(1 - {\nu}^{1/\nu}\right) L_{\nu}^{p+1}}{2 L_{k+1}^p},
$$
i.e.,
$$
    (p+1)L_{k+1}^p \left(L_{k+1} - L_k\right) \le \frac{{\nu}^{\frac{\nu}{1 - \nu}} \left(1 - {\nu}^{1/\nu}\right)(p+1) L_{\nu}^{p+1}}{R} := \alpha.
$$

Note that, since $L_{k+1} \geq L_k$, we have 
$$
    (p+1)L_{k+1}^p(L_{k+1} - L_k) \ge (p+1)\int_{L_k}^{L_{k+1}} t^{p} dt = L_{k+1}^{p+1} - L_k^{p+1}.
$$
Then
$$
    L_{k+1}^{p+1} - L_k^{p+1} \le \alpha,
$$
and from the telescopic sum, we get
\begin{equation}\label{eq_rxd}
    L_{k+1}^{p+1} \le k\alpha + L_0^{p+1}.
\end{equation}

For a sufficiently large $k$, we have $L_0^{p+1} \leq k\alpha$, then from \eqref{eq_rxd} with $p = \frac{1+\nu}{1-\nu}$, we get  
\begin{equation}\label{eq:estim_Lk1}
    L_{k+1} \le (2k\alpha)^{\frac{1}{p+1}} 
    = \left(2k \, \frac{{\nu}^{\frac{\nu}{1 - \nu}} \left(1 - {\nu}^{1/\nu}\right)(p+1) L_{\nu}^{p+1}}{R}\right)^{\frac{1}{p+1}} 
    = k^{\frac{1-\nu}{2}} \frac{{\nu}^{\nu/2} \left (1 - {\nu}^{1/\nu}\right)^{\frac{1-\nu}{2}} L_{\nu}}{\left(\frac{1 - \nu}{4}\right)^{\frac{1-\nu}{2}} R^{\frac{1-\nu}{2}}} %
    \le \left(\frac{4k}{R}\right)^{\frac{1-\nu}{2}} L_{\nu}.
\end{equation}

By the parallelogram identity $2 \|a\|^2 + 2\|b\|^2 = \|a+b\|^2 + \|a-b\|^2$ with $a = z_k - w_k, b = z_{k+1} - w_k $, we find
\begin{equation}\label{estim_R}
    R = D^2 + \|z_k - z_{k+1}\|^2 - \|z_k - w_k\|^2 - \|z_{k+1} - w_k\|^2 = D^2 + \frac{1}{2}\|z_k - z_{k+1}\|^2 - \frac{1}{2}\|z_k + z_{k+1} - 2w_k\|^2 \ge \frac{D^2}{2}.
\end{equation}
Thus, from \eqref{eq:estim_Lk1} and \eqref{estim_R}, we get the following upper bound of the $L_{k+1}$
\begin{equation*}
    L_{k+1} \le \left(\frac{8k}{D^2}\right)^{\frac{1-\nu}{2}} L_{\nu}.
\end{equation*}
\end{proof}

By combining \eqref{eq:eatim_gap} with the result of Lemma \ref{lemma2}, we have come to the following result, which indicates the convergence rate of the method in the deterministic case. 
\begin{theorem}
Assume that the operator $g$ is monotone and  H\"older continuous with constant $L_{\nu}$ for some $\nu \in (0,1)$, given on a convex compact subset $Q$ bounded by a constant $D$. Then for problem \eqref{prob:VIweak}, by Algorithm \ref{alg:UMP}, it holds the following inequality
$$
    \operatorname{Gap} (\widehat{w}) \le \frac{16 L_{\nu}D^{1+\nu}}{(8k)^{\frac{1+\nu}{2}}},
$$
where $\widehat{w} = \frac{1}{k}\sum_{i=0}^k w_i$. Consequently, to achieve an $\varepsilon$-solution to \eqref{prob:VIweak}, i.e., $\operatorname{Gap} (\widehat{w}) \leq \varepsilon $, it needs to make
$$
    k = O\left( \inf\limits_{\nu \in [0,1]} \left(\frac{16 L_{\nu}}{\varepsilon}\right)^{\frac{2}{1+\nu}} \frac{D^2}{8} \right)  = O\left( \inf\limits_{\nu \in [0,1]} \left(\frac{L_{\nu}}{\varepsilon}\right)^{\frac{2}{1+\nu}} 2^{\frac{5-3\nu}{1+\nu}} D^2 \right).
$$
oracle calls of Algorithm \ref{alg:UMP}.
\end{theorem}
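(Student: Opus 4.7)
The plan is to simply combine the two main ingredients already established in this subsection: the gap estimate \eqref{eq:eatim_gap}, which reads $\operatorname{Gap}(\widehat{w}) \le \frac{2D^2 L_{k+1}}{k}$, and the upper bound on $L_{k+1}$ from Lemma \ref{lemma2}, namely $L_{k+1} \le \left(\frac{8k}{D^2}\right)^{(1-\nu)/2} L_\nu$. The monotonicity of $g$ needed to pass from the averaged inner-product sum to the gap is already absorbed into the derivation of \eqref{eq:eatim_gap}, so no new analytical work is required.

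First I would substitute the bound on $L_{k+1}$ into \eqref{eq:eatim_gap} to obtain
\begin{equation*}
\operatorname{Gap}(\widehat{w}) \le \frac{2 D^2}{k}\left(\frac{8k}{D^2}\right)^{\frac{1-\nu}{2}} L_\nu = 2 \cdot 8^{\frac{1-\nu}{2}}\, \frac{L_\nu D^{1+\nu}}{k^{\frac{1+\nu}{2}}}.
\end{equation*}
The bookkeeping step is to rewrite this in the form stated in the theorem by multiplying numerator and denominator of the right-hand side by $8^{(1+\nu)/2}$, using the identity $8^{(1-\nu)/2}\cdot 8^{(1+\nu)/2} = 8$, which yields
\begin{equation*}
2 \cdot 8^{\frac{1-\nu}{2}} = \frac{2\cdot 8}{8^{\frac{1+\nu}{2}}} = \frac{16}{8^{\frac{1+\nu}{2}}},
\end{equation*}
and hence the announced estimate $\operatorname{Gap}(\widehat{w}) \le \frac{16 L_\nu D^{1+\nu}}{(8k)^{(1+\nu)/2}}$.

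For the complexity statement I would set the right-hand side no larger than $\varepsilon$, solve for $k$, and take the infimum over $\nu \in [0,1]$. Explicitly, the inequality $\frac{16 L_\nu D^{1+\nu}}{(8k)^{(1+\nu)/2}} \le \varepsilon$ is equivalent to $8k \ge \left(\frac{16 L_\nu}{\varepsilon}\right)^{2/(1+\nu)} D^2$, which after dividing by $8$ and absorbing constants via $8 = 2^3$ and $16 = 2^4$ gives the factor $2^{(5-3\nu)/(1+\nu)}$ in the second form of the bound. Since the same iterate $\widehat{w}$ serves as an $\varepsilon$-solution for every admissible value of $\nu$, one is free to take the infimum over $\nu$ without any algorithmic modification, which completes the argument.

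No step is really an obstacle here: Lemma \ref{lemma2} already contains all the nontrivial work, and the remainder is just algebraic simplification. The only mild subtlety is keeping track of constants so that the final $16/(8k)^{(1+\nu)/2}$ form matches what the authors stated, and verifying that the complexity expression is consistent with the displayed simplification $(16)^{2/(1+\nu)}/8 = 2^{(8-3(1+\nu))/(1+\nu)} = 2^{(5-3\nu)/(1+\nu)}$.
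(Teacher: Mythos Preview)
Your proposal is correct and matches the paper's approach exactly: the paper states the theorem as an immediate consequence of combining \eqref{eq:eatim_gap} with Lemma~\ref{lemma2}, without writing out any further details. Your algebraic verification of the constants (in particular the rewriting $2\cdot 8^{(1-\nu)/2} = 16/8^{(1+\nu)/2}$ and the simplification $(16)^{2/(1+\nu)}/8 = 2^{(5-3\nu)/(1+\nu)}$) is accurate and in fact more explicit than what the paper provides.
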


\subsection{Analysis for stochastic setting}
In this subsection, we provide an analysis for Algorithm \ref{alg:StocUMP}, when the setting of the problem \eqref{prob:VIweak} is stochastic. Algorithm \ref{alg:StocUMP} is adaptive concerning the H\"older exponent $\nu$, H\"older constant $L_{\nu}$, and variance of the stochastic oracle $\sigma$. We mention that the adaptivity according to the parameter $\sigma$ is somewhat different from the adaptivity according to the parameter $L$. The adaptivity according to $\sigma$ can be understood in the sense that the $\sigma$ absences in the steps of Algorithm \ref{alg:StocUMP}, but it is not selected in the method as the selection of the parameter $L$ (see item 4 in Algorithm \ref{alg:StocUMP}). 

\begin{algorithm}[htp]
\caption{Universal Mirror Proximal Method (UMP) (for the stochastic setting of the problem).}
\label{alg:StocUMP}
\textbf{Inputs:} $z_0 = \arg\min\limits_{u \in Q} d(u),\ L_0 = \|g(z_0, \xi_{z_0})\|$ for a generated random variable $\xi_{z_0}$, and diameter $D>0$.
\hspace*{\algorithmicindent}
\begin{algorithmic}[1]
\For{$k = 0, 1,  \ldots$}
\State $w_k = \arg\min\limits_{x \in Q} \left(\left\langle g(z_k, \xi_{z_k}), x  - z_k \right\rangle  +  \frac{L_k}{2} \left\|z_k - x\right\|^2 \right),$ 
\State  $z_{k+1} = \arg\min\limits_{x \in Q} \left(\left\langle g(w_k, \xi_{w_k}), x - w_k\right\rangle  +  \frac{L_k}{2} \left\|z_k - x\right\|^2 \right),$
\State $L_{k+1} = L_k + \max\left\{0, \frac{2\left\langle g(w_k, \xi_{w_k}), w_k - z_{k+1}\right\rangle  - L_k \left \|z_k - z_{k+1}\right\|^2}{D^2 + \left \|z_k - z_{k+1}\right\|^2} \right\}.$
\EndFor
\end{algorithmic}
\end{algorithm}

We have $g(x) = \mathbb{E}_{\xi_x} [g(x, \xi_x)]$, but we can only get the recalculation of $L_{k+1}$, $w_k$ and $z_{k+1}$ using noisy operator $g(x, \xi_x)$.

Let us repeat all estimates from \eqref{eq:operator-h} to \eqref{eq_2228} for the operator $g(x, \xi_x)$ instead of $g(x)$ and obtain:
\begin{align*}
    \left\langle g(w_k, \xi_{w_k}), w_k - y \right\rangle  +  \frac{L_{k+1}}{2}\|z_{k+1} - y\|^2 & \le \left\langle g(w_k, \xi_{w_k}), w_k - z_{k+1} \right\rangle  - \frac{L_k }{2}\|z_k - z_{k+1}\|^2 
    \\& \quad +  \frac{L_{k+1} - L_k}{2}\|z_{k+1} - y\|^2 +  \frac{ L_k}{2}\|z_k - y\|^2.  
\end{align*}

We again look for $L_{k+1}$ such that:
$$ 
    \frac{\left(L_{k+1} - L_k\right) D^2 }{2} = \left|\left\langle g(w_k, \xi_{w_k}), w_k - z_{k+1}\right\rangle  -  \frac{L_{k+1}}{2}\|z_k - z_{k+1}\|^2\right|_+.
$$

Using this formula for $L_{k+1}$ we get
\begin{equation}\label{wq_kh214}
    \left\langle g(w_k, \xi_{w_k}), w_k - y\right\rangle  +  \frac{L_{k+1}}{2}\|z_{k+1} - y\|^2 \le 2D^2(L_{k+1} - L_k) +  \frac{L_k}{2}\|z_k - y\|^2.
\end{equation}

Since $\xi_{w_k}$ is independent with $w_k$, then $\mathbb{E}_{\xi_{w_k}} \left[\left\langle g(w_k, \xi_{w_k}), w_k - y\right\rangle \right]  = \left\langle g(w_k), w_k - y\right\rangle $. Thus,  by applying the expected value to both parts of the inequality \eqref{wq_kh214}, we get
$$
    \left\langle g(w_k), w_k - y\right\rangle  +  \mathbb{E}_{\xi_{w_k}} \left[ \frac{L_{k+1}}{2} \|z_{k+1} - y\|^2 \right] \le 2D^2 \left(\mathbb{E}_{\xi_{w_k}} \left[L_{k+1}\right] - L_k \right) + \frac{L_k}{2}\|z_k - y\|^2.
$$
and 
\begin{align*}
    \mathbb{E}_{\xi_{w_{k-1}}} \left[ \left\langle g(w_k), w_k - y\right\rangle \right]  +  \mathbb{E}_{\xi_{w_k}} \left[ \frac{L_{k+1}}{2} \|z_{k+1} - y\|^2 \right] & \le 2D^2 \left(\mathbb{E}_{\xi_{w_k}} \left[L_{k+1}\right] - \mathbb{E}_{\xi_{w_{k-1}}} \left[L_k\right] \right) 
    \\& \quad + \mathbb{E}_{\xi_{w_{k-1}}} \left[ \frac{L_k}{2}\|z_k - y\|^2 \right].
\end{align*}

Using the telescopic sum, we get
\begin{equation}\label{eq_4444stoc}
    \mathbb{E}_{\xi} \left[ \frac{1}{k}\sum_{i=0}^k \left\langle g(w_i), w_i - y \right\rangle \right]  \le \frac{2D^2 \mathbb{E}_{\xi} \left[L_{k+1}\right] }{k}.
\end{equation}

For $\widehat{w} = \frac{1}{k}\sum_{i=0}^k w_i$, we have
\begin{equation}\label{eq_5555stoc}
    \operatorname{Gap}(\widehat{w}) = \max\limits_{y \in Q} \left\langle g(y), \widehat{w} - y \right\rangle  = \max\limits_{y \in Q} \frac{1}{k}\sum\limits_{i=0}^k \left( \left\langle g(w_i), w_i - y \right\rangle  + \left\langle g(y) - g(w_i), w_i - y \right\rangle  \right).
\end{equation}
Since $g$ is a monotone operator, then $\left\langle g(y) - g(w_i), w_i - y \right\rangle  \le 0$,  and as a result from \eqref{eq_4444stoc} and \eqref{eq_5555stoc}, we get 
\begin{equation}\label{eq_estim_gapstoc}
    \mathbb{E}_{\xi} \left[\operatorname{Gap}(\widehat{w})\right] \le \frac{2D^2 \mathbb{E}_{\xi}\left[L_{k+1}\right]}{k}.
\end{equation}

Our goal now is finding an upper bound to $\mathbb{E}_{\xi}\left[L_{k+1}\right]$. To this goal, let us first prove the following lemma.

\begin{lemma}\label{lemma4}
Given a sequence of positive numbers $x_k, \alpha_k, \beta_k$. Let us assume that for any $k \geq 0$ there exists $p, q \ge 1$ such that at least one of the following two conditions is satisfied
$$
    x_{k+1}^p \le \alpha_k + x_k^p, \quad \text{and} \quad x_{k+1}^q \le \beta_k + x_k^q.  
$$
Then
\begin{equation}\label{eq_421bfgf}
    x_{k+1} \le \left(\sum\limits_{i=0}^k \alpha_i\right)^{1/p} + \left(\sum\limits_{i=0}^k \beta_i\right)^{1/q} + x_0.
\end{equation}
\end{lemma}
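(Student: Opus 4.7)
The plan is to strengthen the induction by introducing the quantity $\phi_k := \left(\sum_{i=0}^{k-1} \alpha_i\right)^{1/p} + \left(\sum_{i=0}^{k-1} \beta_i\right)^{1/q} + x_0$ and proving the invariant $x_k \le \phi_k$ for every $k \ge 0$, which is exactly \eqref{eq_421bfgf} with the index shifted by one. The base case $k = 0$ is immediate since both sums are empty, giving $\phi_0 = x_0$. A naive telescoping via the subadditivity $(a+b)^{1/p} \le a^{1/p} + b^{1/p}$ turns out to be too loose: it only yields $x_{k+1} \le x_0 + \sum_i \alpha_i^{1/p} + \sum_i \beta_i^{1/q}$, whereas the claim packs the individual terms inside a single $1/p$-th (respectively $1/q$-th) root, which is strictly stronger when several $\alpha_i$ (or $\beta_i$) are active.

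For the inductive step, suppose the first alternative holds at step $k$, so $x_{k+1}^p \le \alpha_k + x_k^p$. Monotonicity of $t \mapsto t^p$ on $[0, \infty)$ together with the inductive hypothesis then gives $x_{k+1}^p \le \alpha_k + \phi_k^p$. Introduce the shorthands $t := \left(\sum_{i=0}^{k-1} \alpha_i\right)^{1/p}$, $s := \left(\sum_{i=0}^{k-1} \beta_i\right)^{1/q} + x_0$, and $t' := (t^p + \alpha_k)^{1/p}$, so that $\phi_k = t+s$, $\phi_{k+1} = t'+s$, and $(t')^p - t^p = \alpha_k$. It then suffices to establish the single scalar inequality $(t'+s)^p \ge (t+s)^p + \alpha_k$, after which taking $p$-th roots delivers $x_{k+1} \le \phi_{k+1}$.

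The technical core of the argument is the monotonicity-of-increment observation: the function $f(\tau) := (\tau + s)^p - \tau^p$ is non-decreasing on $[0,\infty)$ whenever $p \ge 1$ and $s \ge 0$, because $f'(\tau) = p\bigl[(\tau+s)^{p-1} - \tau^{p-1}\bigr] \ge 0$ thanks to $p - 1 \ge 0$ and $\tau + s \ge \tau$. Since $t' \ge t$, this gives $f(t') \ge f(t)$, which rearranges exactly to $(t'+s)^p - (t+s)^p \ge (t')^p - t^p = \alpha_k$, closing the induction. The second alternative at step $k$ is handled by the mirror-image argument, swapping the roles of $(p, \alpha_i)$ and $(q, \beta_i)$ and using the same monotonicity lemma with exponent $q$ in place of $p$. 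The main obstacle is pinpointing the correct invariant and the monotonicity trick: without them one is forced into telescoping inequalities that cannot recover the packed-sum form on the right-hand side of \eqref{eq_421bfgf}.
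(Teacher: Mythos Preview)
Your proof is correct and follows the same inductive scaffolding as the paper: both set up the invariant $x_k \le \phi_k$ and reduce the inductive step (in the $\alpha$-case) to the single scalar inequality $(t'+s)^p \ge (t+s)^p + \bigl((t')^p - t^p\bigr)$. The only difference is in how that inequality is justified. You prove it by observing that $f(\tau) = (\tau+s)^p - \tau^p$ has nonnegative derivative on $[0,\infty)$, which is self-contained and uses nothing beyond one-variable calculus. The paper instead casts the same inequality as an instance of the reverse Minkowski inequality for the $\ell^{1/p}$ quasi-norm (with $1/p \le 1$), applied to the vectors $(\alpha_k,0,0)$ and $\bigl(\sum_{i<k}\alpha_i,\ (\sum_{i<k}\beta_i)^{p/q},\ x_0^p\bigr)$ in $\mathbb{R}^3$. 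Your route is more elementary; the paper's phrasing makes the structural reason explicit. One small slip worth fixing: you write ``$\phi_{k+1} = t'+s$'', but $\phi_{k+1}$ also absorbs the new $\beta_k$ term in the second sum, so in fact $t'+s \le \phi_{k+1}$. This only strengthens your conclusion and does not affect correctness.
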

\begin{proof}
Let us prove \eqref{eq_421bfgf} by induction on $k \geq 0$.
\begin{itemize}
\item Let $x_1^p \le \alpha_0 + x_0^p$, then $x_1 \le \left(\alpha_0 + x_0^p\right)^{1/p} \le \alpha_0^{1/p} + \beta_0^{1/q} +  x_0 $.

\item Let $x_1^q \le \beta_0 + x_0^q$, then $x_1 \le \left(\beta_0 + x_0^q\right)^{1/q} \le \alpha_0^{1/p} + \beta_0^{1/q} + x_0$.
\end{itemize}

Let us assume that $x_{k} \le \left(\sum\limits_{i=0}^{k-1} \alpha_i\right)^{1/p} + \left(\sum\limits_{i=0}^{k-1} \beta_i\right)^{1/q} + x_0$. Then,

If  $x_{k+1}^p \le \alpha_k + x_k^p$, then we get
\begin{equation}\label{eq:47ghm}
    x_{k+1} \le \left(\alpha_k + \left(\left(\sum\limits_{i=0}^{k-1} \alpha_i\right)^{1/p} + \left(\sum\limits_{i=0}^{k-1} \beta_i\right)^{1/q} + x_0\right)^p \right)^{1/p}.
\end{equation}

Let us consider the vectors $(\alpha_k, 0, 0)$ and $\left(\sum_{i=0}^{k-1} \alpha_i, \left(\sum_{i=0}^{k-1} \beta_i \right)^{p/q}, x_0^p\right)$. Then, by the inverse triangle inequality for Minkowski space, we have
\begin{align*}
    \alpha_k + \left(\left(\sum\limits_{i=0}^{k-1} \alpha_i\right)^{1/p} + \left(\sum\limits_{i=0}^{k-1} \beta_i\right)^{1/q} + x_0\right)^p & = 
    \left(\alpha_k^{1/p} + 0^{1/p} + 0^{1/p}\right)^p + \left(\left(\sum\limits_{i=0}^{k-1} \alpha_i\right)^{1/p} + \left(\sum\limits_{i=0}^{k-1} \beta_i\right)^{p/q \ \cdot\ 1/p} + \left(x_0^p\right)^{1/p}\right)^p
     \\& 
     \le \left(\left(\alpha_k + \sum\limits_{i=0}^{k-1} \alpha_i\right)^{1/p} + \left( \sum\limits_{i=0}^{k-1} \beta_i\right)^{p/q \ \cdot\ 1/p} + \left(x_0^p\right)^{1/p}\right)^p.
\end{align*}
Thus, from \eqref{eq:47ghm}, we find
$$
    x_{k+1} \le \left(\sum\limits_{i=0}^{k} \alpha_i\right)^{1/p} + \left(0 + \sum\limits_{i=0}^{k-1} \beta_i\right)^{1/q} + x_0 \le \left(\sum\limits_{i=0}^{k} \alpha_i\right)^{1/p} + \left(\sum\limits_{i=0}^{k} \beta_i\right)^{1/q} + x_0.
$$

If $x_{k+1}^q \le \beta_k + x_k^q$, then we get
\begin{equation}\label{eq:oin12}
    x_{k+1} \le \left(\beta_k + \left(\left(\sum\limits_{i=0}^{k-1} \alpha_i\right)^{1/p} + \left(\sum\limits_{i=0}^{k-1} \beta_i\right)^{1/q} + x_0\right)^q \right)^{1/q}.
\end{equation}

Let us consider the vectors $(0, \beta_k, 0)$ and $\left( \left( \sum_{i=0}^{k-1} \alpha_i \right)^{q/p}, \sum_{i=0}^{k-1} \beta_i, x_0^q\right)$. Then, by the inverse triangle inequality for Minkowski space, we have
\begin{align*}
    \beta_k + \left(\left(\sum\limits_{i=0}^{k-1} \alpha_i\right)^{1/p} + \left(\sum\limits_{i=0}^{k-1} \beta_i\right)^{1/q} + x_0\right)^q
    &=
    \left(0^{1/q} + \beta_k^{1/q} + 0^{1/q}\right)^q + \left(\left(\sum\limits_{i=0}^{k-1} \alpha_i\right)^{q/p \ \cdot\ 1/q} + \left(\sum\limits_{i=0}^{k-1} \beta_i\right)^{1/q} + \left(x_0^q\right)^{1/q}\right)^q
    \\& \le 
    \left(\left(\sum\limits_{i=0}^{k-1} \alpha_i\right)^{q/p \ \cdot\ 1/q} + \left(\beta_k + \sum\limits_{i=0}^{k-1} \beta_i\right)^{1/q} + \left(x_0^q\right)^{1/q}\right)^q.
\end{align*}
Thus, from \eqref{eq:oin12}, we find
$$
    x_{k+1} \le \left(0 + \sum\limits_{i=0}^{k-1} \alpha_i\right)^{1/p} + \left(\sum\limits_{i=0}^{k} \beta_i\right)^{1/q} + x_0 \le \left(\sum\limits_{i=0}^{k} \alpha_i\right)^{1/p} + \left( \sum\limits_{i=0}^{k} \beta_i\right)^{1/q} + x_0.
$$
\end{proof}

Now, let us prove the following lemma, which gives an upper bound to $\mathbb{E}_{\xi}\left[L_{k+1}\right]$. 

\begin{lemma}\label{lemma3}
Assume that the operator $g$ is monotone and  H\"older continuous with constant $L_{\nu}$ for some $\nu \in (0,1)$, given on a convex compact subset $Q$ bounded by a constant $D$. Then for problem \eqref{prob:VIweak}, in the stochastic setting under Assumption \ref{assump_bv}, by Algorithm \ref{alg:StocUMP}, the following inequality holds
$$
    \mathbb{E}_{\xi} \left[L_{k+1}\right] \le 2\left(\frac{8k}{D^2}\right)^{\frac{1-\nu}{2}} L_{\nu} + \frac{\sqrt{8k}}{D} \sigma + L_0.
$$
\end{lemma}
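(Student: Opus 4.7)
The plan is to adapt the deterministic argument of Lemma \ref{lemma2} to the stochastic setting and then invoke Lemma \ref{lemma4} to produce the additive noise term. Reproducing the chain of estimates that led to \eqref{eq_03021} with $g(\cdot,\xi)$ in place of $g(\cdot)$, and writing $\sigma_{x}:=\|g(x,\xi_x)-g(x)\|$, the triangle inequality applied after inserting $\pm g(w_k),\pm g(z_k)$ gives
\begin{equation*}
\langle g(w_k,\xi_{w_k}) - g(z_k,\xi_{z_k}),\, w_k - z_{k+1}\rangle \le L_\nu \|w_k - z_k\|^\nu \|w_k - z_{k+1}\| + (\sigma_{w_k}+\sigma_{z_k})\|w_k - z_{k+1}\|.
\end{equation*}
Combining this with the stochastic counterpart of \eqref{eq_7575} and the identity \eqref{estim_R}, one obtains
\begin{equation*}
\frac{L_{k+1}-L_k}{2}R \le L_\nu \|w_k-z_k\|^\nu\|w_k-z_{k+1}\| + (\sigma_{w_k}+\sigma_{z_k})\|w_k-z_{k+1}\| - \frac{L_{k+1}}{2}\bigl(\|z_k-w_k\|^2+\|z_{k+1}-w_k\|^2\bigr),
\end{equation*}
where $R\ge D^2/2$.

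Next, split the quadratic reserve $-\tfrac{L_{k+1}}{2}(\|z_k-w_k\|^2+\|z_{k+1}-w_k\|^2)$ into two equal halves. One half absorbs the H\"older product via the concave-maximisation computation that produced \eqref{gtht}, bounding that contribution by $c_1 L_\nu^{p+1}/L_{k+1}^p$ with $p=(1+\nu)/(1-\nu)$; the other half absorbs the noise product via Young's inequality, bounding it by $c_2(\sigma_{w_k}+\sigma_{z_k})^2/L_{k+1}$. At each iteration one of these two terms dominates the right-hand side, so after multiplying through by the appropriate power of $L_{k+1}$ and using the elementary bounds $L_{k+1}^{p+1}-L_k^{p+1}\le (p+1)L_{k+1}^p(L_{k+1}-L_k)$ and $L_{k+1}^2-L_k^2\le 2L_{k+1}(L_{k+1}-L_k)$, we arrive at a dichotomy: for every $k$,
\begin{equation*}
L_{k+1}^{p+1}\le L_k^{p+1}+\alpha \qquad\text{or}\qquad L_{k+1}^2\le L_k^2+\beta_k,
\end{equation*}
with $\alpha$ a deterministic constant proportional to $L_\nu^{p+1}/D^2$ and $\beta_k$ proportional to $(\sigma_{w_k}+\sigma_{z_k})^2/D^2$.

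With $p+1=2/(1-\nu)$ and $q=2$, Lemma \ref{lemma4} applied pathwise yields
\begin{equation*}
L_{k+1}\le (k\alpha)^{(1-\nu)/2}+\Bigl(\sum_{i=0}^{k-1}\beta_i\Bigr)^{1/2}+L_0.
\end{equation*}
Taking expectation, the first summand is deterministic and, by the same algebra as in the last line of the proof of Lemma \ref{lemma2}, collapses to $2(8k/D^2)^{(1-\nu)/2}L_\nu$; for the second, Jensen's inequality applied to the concave map $t\mapsto t^{1/2}$ together with Assumption \ref{assump_bv} and the bound $(\sigma_{w_i}+\sigma_{z_i})^2\le 2(\sigma_{w_i}^2+\sigma_{z_i}^2)$ delivers $\mathbb{E}\bigl[(\sum_i\beta_i)^{1/2}\bigr]\le \bigl(\sum_i\mathbb{E}[\beta_i]\bigr)^{1/2}\le \sqrt{8k}\,\sigma/D$.

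The main obstacle will be the bookkeeping around the dichotomy: choosing the split of the quadratic reserve and the Young constants precisely so that both resulting recursions fit the hypotheses of Lemma \ref{lemma4} simultaneously, while the absolute constants still collapse to the clean factor $2$ and $\sqrt{8}$ advertised in the statement. The measurability/independence issues are mild, since Lemma \ref{lemma4} is applied path-by-path and expectation is taken only at the very last step.
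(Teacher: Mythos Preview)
Your plan follows the same skeleton as the paper: derive a stochastic analogue of \eqref{eq_03021} with an additive noise term, establish a per-iteration dichotomy between a H\"older-dominated and a noise-dominated regime, invoke Lemma~\ref{lemma4} pathwise, then take expectations via Jensen. The route to the dichotomy, however, differs. The paper keeps a single noise term $\delta_k:=\|g(w_k,\xi_{w_k})-g(w_k)\|$, maximizes the full concave function $f(x,y)=L_\nu x^\nu y+\delta_k y-\tfrac{L_{k+1}}{2}(x^2+y^2)$ jointly, parametrizes the maximizer through a scalar $C_k$, and then case-splits on whether $\tfrac{\delta_k}{L_{k+1}}\bigl(\tfrac{L_{k+1}}{\nu^{\nu/2}L_\nu}\bigr)^{1/(1-\nu)}$ lies below or above $C_k^\nu$; each case yields one of the two recursions required by Lemma~\ref{lemma4} (with $q+1=2$, matching your choice $q=2$). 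Your split-the-reserve-then-Young approach is more elementary and sidesteps the $C_k$-algebra entirely; the paper's joint maximization is more intricate but keeps the constants tighter, which is why it lands exactly on $\sqrt{8k}\,\sigma/D$, whereas your route---with the halved quadratic and the extra factor of~$2$ from ``one term dominates''---will overshoot unless the split ratio is tuned. One further difference worth flagging: you also insert $\pm g(z_k)$ and therefore carry $\sigma_{z_k}$ in addition to $\sigma_{w_k}$, while the paper only carries $\delta_k$. Since $w_k$ in Algorithm~\ref{alg:StocUMP} is defined through $g(z_k,\xi_{z_k})$ rather than $g(z_k)$, your accounting is arguably the more careful one, at the cost of a doubled variance contribution in the final bound.
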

\begin{proof}
If the right side is negative, then we get $L_{k+1} = L_k$, otherwise
\begin{align*}
    \frac{\left(L_{k+1} - L_k\right) D^2}{2} & = \left\langle g(w_k, \xi_{w_k}), w_k - z_{k+1} \right\rangle  -  \frac{L_{k+1}}{2}\|z_k - z_{k+1}\|^2
    \\& = \left\langle g(w_k, \xi_{w_k}) - g(w_k), w_k - z_{k+1} \right\rangle  + \left\langle g(w_k), w_k - z_{k+1} \right\rangle  -  \frac{L_{k+1}}{2}\|z_k - z_{k+1}\|^2.
\end{align*}

Similar to Lemma \ref{lemma2} from \eqref{eq_0101} to \eqref{eq_666}, we obtain
\begin{align*}
    \left(L_{k+1} - L_k\right)\frac{R}{2} & \le   L_{\nu}\|w_k - z_k\|^{\nu}\|w_k - z_{k+1}\| -  \frac{L_{k+1}}{2}\left( \|z_k - w_k\|^2 + \|z_{k+1} - w_k\|^2 \right) 
    \\& \quad + \left\langle g(w_k, \xi_{w_k}) - g(w_k), w_k - z_{k+1} \right\rangle .
\end{align*}

Let's denote $\delta_k := \|g(w_k, \xi_{w_k}) - g(w_k)\|$, then
$$
    \left( L_{k+1} - L_k \right) \frac{R}{2} \le L_{\nu} \| w_k - z_k \|^{\nu}\|w_k - z_{k+1} \| - \frac{ L_{k+1}}{2} \left( \|z_k - w_k\|^2 + \| z_{k+1} - w_k \|^2 \right) + \delta_k \| w_k - z_{k+1} \|.
$$

Let us define the function $f(x,y) := L_{\nu}x^{\nu}y + \delta_k y - \frac{L_{k+1}}{2} \left(x^2 + y^2\right)$ on $\mathbb{R}^2$, which is concave for any $L_{\nu} > 0, L_{k+1} > 0, \delta_k$ and $\nu \in (0, 1)$. This function attains its maximal value at the point $(x, y ) \in \mathbb{R}^2$, such that 
$$
    \frac{\partial f}{\partial x} = \nu L_{\nu}x^{\nu - 1}y - L_{k+1} x = 0, \quad \text{and} \quad L_{\nu}x^{\nu} - L_{k+1} y + \delta_k = 0.
$$

From these equations, we get
$$
    y = \frac{L_{k+1} x^{2 - \nu}}{\nu L_{\nu}}  \quad \text{and} \quad  y= \frac{L_{\nu}x^{\nu} + \delta_k}{L_{k+1}}. 
$$
Thus, we have
$$
    \frac{L_{k+1} x^{2 - \nu}}{\nu L_{\nu}} = \frac{L_{\nu}x^{\nu}}{L_{k+1}} + \frac{\delta_k}{L_{k+1}}.
$$

Let's substitute $x = C_k\left(\frac{\sqrt{\nu} L_{\nu}}{L_{k+1}}\right)^{\frac{1}{1 - \nu}}$, then $y = C_k^{2 - \nu} \left(\frac{\nu^{\nu/2} L_{\nu}}{L_{k+1}}\right)^{\frac{1}{1 - \nu}}$ and it holds the following inequality
$$
    \frac{L_{k+1} C_k^{2-\nu}}{\nu L_{\nu}} \left(\frac{L_{k+1}}{\nu^{\nu/ 2} L_{\nu}}\right)^{\frac{2 - \nu}{1 - \nu}} = \frac{L_{\nu} C_k^{\nu}}{L_{k+1}} \left(\frac{L_{k+1}}{\nu^{\nu/ 2} L_{\nu}}\right)^{\frac{\nu}{1 - \nu}} + \frac{\delta_k}{L_{k+1}}.
$$
Thus, 
$$
    C_k^{2-\nu} - C_k^{\nu} - \frac{\delta_k}{L_{k+1}}\left(\frac{L_{k+1}}{\nu^{\nu/ 2} L_{\nu}}\right)^{\frac{1}{1 - \nu}} = 0.
$$
Since $\delta_k \ge 0$, the maximum of the function $f(x, y)$ is achieved at $y \ge 0$, then $C_k \ge 0$.

Now let's substitute $x = C_k\left(\frac{\sqrt{\nu} L_{\nu}}{L_{k+1}}\right)^{\frac{1}{1 - \nu}}$ and $y = C_k^{2 - \nu} \left(\frac{\nu^{\nu/2} L_{\nu}}{L_{k+1}}\right)^{\frac{1}{1 - \nu}}$ in $f(x, y)$, then we get
\begin{align*}
    f(x, y) & = C_k^2 L_{\nu}\left(\frac{\sqrt{\nu} L_{\nu}}{L_{k+1}}\right)^{\frac{\nu}{1 - \nu}}\left(\frac{\nu^{\nu/ 2} L_{\nu}}{L_{k+1}}\right)^{\frac{1}{1 - \nu}} - \frac{L_{k+1}}{2} \left( C_k^2\left(\frac{\sqrt{\nu} L_{\nu}}{L_{k+1}}\right)^{\frac{2}{1 - \nu}} + C_k^{4-2\nu}\left(\frac{\nu^{\nu/2} L_{\nu}}{L_{k+1}}\right)^{\frac{2}{1 - \nu}} \right)
    \\& \quad  + \delta_k C_k^{2 - \nu} \left(\frac{\nu^{\nu/2} L_{\nu}}{L_{k+1}}\right)^{\frac{1}{1 - \nu}} 
    \\& = \frac{{\nu}^{\frac{\nu}{1 - \nu}} L_{\nu}^{\frac{2}{1 - \nu}}}{L_{k+1}^{\frac{1 + \nu}{1 - \nu}}} C_k^2 \left(1 - \frac{\nu^{1/\nu}}{2} - \frac{C_k^{2-2\nu}}{2} \right) + \delta_k C_k^{2 - \nu} \left(\frac{\nu^{\nu/2} L_{\nu}}{L_{k+1}}\right)^{\frac{1}{1 - \nu}}
    \\& = \frac{{\nu}^{\frac{\nu}{1 - \nu}} L_{\nu}^{\frac{2}{1 - \nu}}}{2L_{k+1}^{\frac{1 + \nu}{1 - \nu}}} C_k^2 \left(1 - \nu^{1/\nu} - \frac{\frac{\delta_k}{L_{k+1}} \left(\frac{L_{k+1}}{\nu^{\nu / 2} L_{\nu}}\right)^{\frac{1}{1 - \nu}}}{C_k^{\nu}}\right) + \delta_k C_k^{2 - \nu} \left(\frac{\nu^{\nu/2} L_{\nu}}{L_{k+1}}\right)^{\frac{1}{1 - \nu}} 
    \\& = \frac{{\nu}^{\frac{\nu}{1 - \nu}} L_{\nu}^{\frac{2}{1 - \nu}} \left(1 - \nu^{1/\nu}\right)}{2L_{k+1}^{\frac{1 + \nu}{1 - \nu}}} C_k^2 + \frac{\delta_k C_k^{2 - \nu}}{2} \left(\frac{\nu^{\nu/2} L_{\nu}}{L_{k+1}}\right)^{\frac{1}{1 - \nu}}.
\end{align*}

Let's consider two cases.
\begin{enumerate}
\item \textbf{First case:}
Let $\frac{\delta_k}{L_{k+1}} \left(\frac{L_{k+1}}{\nu^\frac{\nu}{2} L_{\nu}}\right)^{\frac{1}{1 - \nu}} \le C_k^{\nu}$, 
then 
$$
    0 = C_k^{2-\nu} - C_k^{\nu} - \frac{\delta_k}{L_{k+1}}\left(\frac{L_{k+1}}{\nu^\frac{\nu}{2} L_{\nu}}\right)^{\frac{1}{1 - \nu}} \ge C_k^{2-\nu} - 2C_k^{\nu}.
$$
Thus $C_k \le 2^{\frac{1}{2-2\nu}}$. This means that
\begin{align*}
    f(x, y) & = \frac{{\nu}^{\frac{\nu}{1 - \nu}} L_{\nu}^{\frac{2}{1 - \nu}} \left(1 - \nu^{1/\nu}\right)}{2L_{k+1}^{\frac{1 + \nu}{1 - \nu}}} C_k^2 + \frac{\delta_k C_k^{2 - \nu}}{2} \left(\frac{\nu^{\nu/2} L_{\nu}}{L_{k+1}}\right)^{\frac{1}{1 - \nu}}  
    \\& \le \frac{(2\nu)^{\frac{\nu}{1 - \nu}} L_{\nu}^{\frac{2}{1 - \nu}} \left(1 - \nu^{1/\nu}\right)}{L_{k+1}^{\frac{1 + \nu}{1 - \nu}}} + 2^{\frac{\nu}{1-\nu}} L_{k+1} \left(\frac{\nu^{\nu/2} L_{\nu}}{L_{k+1}}\right)^{\frac{2}{1 - \nu}}
    \\& = \frac{(2\nu)^{\frac{\nu}{1 - \nu}} L_{\nu}^{\frac{2}{1 - \nu}}}{L_{k+1}^{\frac{1 + \nu}{1 - \nu}}} \left(2 - \nu^{1/\nu}\right). 
\end{align*}

Let $p = \frac{1 + \nu}{1 - \nu}$, then, similarly to Lemma \ref{lemma2}, $L_{k+1}^{p+1} \le \alpha_k + L_k^{p+1}$, where
$$
    \alpha_k := \frac{2(p+1) (2\nu)^{\frac{\nu}{1 - \nu}} L_{\nu}^{p+1}}{R}\left(2 - \nu^{1/\nu}\right).
$$ 

\item \textbf{Second case:}
Let $\frac{\delta_k}{L_{k+1}} \left(\frac{L_{k+1}}{\nu^{\nu/ 2} L_{\nu}}\right)^{1/(1 - \nu)} \ge C_k^{\nu}$, 
then 
$$
    0 = C_k^{2-\nu} - C_k^{\nu} - \frac{\delta_k}{L_{k+1}}\left(\frac{L_{k+1}}{\nu^{\nu/ 2} L_{\nu}}\right)^{1/(1 - \nu)} \ge C_k^{2-\nu} - \frac{2\delta_k}{L_{k+1}}\left(\frac{L_{k+1}}{\nu^{\nu/ 2} L_{\nu}}\right)^{1/(1 - \nu)},
$$ 
therefore $C_k \le \left(\frac{2\delta_k}{L_{k+1}}\left(\frac{L_{k+1}}{\nu^{\nu/2} L_{\nu}}\right)^{1/(1 - \nu)}\right)^{1/(2-\nu)}$. This means that
\begin{align*}
    f(x, y) & = \frac{{\nu}^{\nu/(1 - \nu)} L_{\nu}^{2/(1 - \nu)} \left(1 - \nu^{1/\nu}\right)}{2L_{k+1}^{\frac{1 + \nu}{1 - \nu}}} C_k^2 + \frac{\delta_k C_k^{2 - \nu}}{2} \left(\frac{\nu^{\nu/2} L_{\nu}}{L_{k+1}}\right)^{1/(1 - \nu)} 
    \\& \le \frac{\delta_k C_k^{2 - \nu} \left(2 - \nu^{1/\nu}\right)}{2} \left(\frac{\nu^{\nu/2} L_{\nu}}{L_{k+1}}\right)^{1/(1 - \nu)}
    \\& \le \frac{\delta_k^2 \left(2 - \nu^{1/\nu}\right)}{L_{k+1}}.
\end{align*}

Let $q = 1$, then, similarly to Lemma \ref{lemma2}, $L_{k+1}^{q+1} \le \beta_k + L_k^{q+1}$, where
$$
    \beta_k := \frac{2\delta_k^2 \left(2 - \nu^{1/ \nu}\right)}{R}.
$$
\end{enumerate}

By Lemma \ref{lemma4} we get
\begin{align*}
L_{k+1} & \le \left(k \frac{2(p+1) (2\nu)^{\frac{\nu}{1 - \nu}} L_{\nu}^{p+1}}{R}\left(2 - \nu^{1/\nu}\right)\right)^{\frac{1}{p+1}} + \sqrt{\frac{2\left(2 - \nu^{1/\nu}\right)}{R} \sum\limits_{i=1}^k \delta_i^2} + L_0 
\\& \le 2^{\frac{2-\nu}{2}} k^{\frac{1-\nu}{2}} \frac{{\nu}^{\nu/2} L_{\nu}}{(1 - \nu)^{\frac{1-\nu}{2}} R^{\frac{1-\nu}{2}}} \left(2 - \nu^{1/\nu}\right)^{\frac{1-\nu}{2}} + \frac{2}{\sqrt{R}} \sqrt{\sum\limits_{i=1}^k \delta_i^2} + L_0
\\& \le 2\left(\frac{4k}{R}\right)^{\frac{1-\nu}{2}} L_{\nu} + \frac{2}{\sqrt{R}} \sqrt{\sum\limits_{i=1}^k \delta_i^2} + L_0.
\end{align*}

According to Jensen's inequality, we have $\mathbb{E}_{\xi} \left[ \left(\sum_{i=1}^k \delta_i^2\right)^{1/2}\right] \le \left(\sum_{i=1}^k \mathbb{E}_{\xi} \left[\delta_i^2\right]\right)^{1/2} = \sigma\sqrt{k}$. 
Thus, we get the following
$$
    \mathbb{E}_{\xi} \left[L_{k+1}\right] \le 2\left(\frac{8k}{D^2}\right)^{\frac{1-\nu}{2}} L_{\nu} + \frac{\sqrt{8k}}{D} \sigma + L_0.$$
\end{proof}

By combining \eqref{eq_estim_gapstoc} with the result of Lemma \ref{lemma3}, we have come to the following result, which indicates the convergence rate of the method in the stochastic case. 
\begin{theorem}
Assume that the operator $g$, as in \eqref{stoc_g}, is monotone and  H\"older continuous with constant $L_{\nu}$ for some $\nu \in (0,1)$, given on a convex compact subset $Q$ bounded by a constant $D$.  Then for problem \eqref{prob:VIweak}, in the stochastic setting under Assumption \ref{assump_bv}, by Algorithm \ref{alg:StocUMP}, it holds the following inequality
$$
    \mathbb{E}_{\xi} \left[\operatorname{Gap} (\widehat{w}) \right] \le 32\left(\frac{D^2}{8k}\right)^{\frac{1+\nu}{2}} L_{\nu} + \frac{32 D}{\sqrt{8k}} \sigma + \frac{2D^2 L_0}{k},
$$
where $\widehat{w} = \frac{1}{k}\sum_{i=0}^k w_i$ and $L_0$ is an initial guess of the H\"older constant. 
\end{theorem}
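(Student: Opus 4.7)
The plan is to derive the stochastic convergence theorem as a direct algebraic consequence of two results that have already been established just above: the inequality
\[
    \mathbb{E}_{\xi}\left[\operatorname{Gap}(\widehat{w})\right] \le \frac{2D^{2}\,\mathbb{E}_{\xi}[L_{k+1}]}{k},
\]
which followed from the descent-type telescoping of \eqref{wq_kh214} together with monotonicity of $g$, and the upper bound on $\mathbb{E}_{\xi}[L_{k+1}]$ proved in Lemma \ref{lemma3}, namely
\[
    \mathbb{E}_{\xi}[L_{k+1}] \le 2\left(\frac{8k}{D^{2}}\right)^{\frac{1-\nu}{2}}L_{\nu} + \frac{\sqrt{8k}}{D}\sigma + L_{0}.
\]
So the first step is simply to substitute the second estimate into the first.

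The second step is to simplify each of the three resulting terms. For the $L_{\nu}$ term, I would rewrite $\frac{D^{2}}{k}\bigl(\tfrac{8k}{D^{2}}\bigr)^{(1-\nu)/2}$ as $\bigl(\tfrac{D^{2}}{k}\bigr)^{(1+\nu)/2}\cdot 8^{(1-\nu)/2}$, then pull a factor of $8^{(1+\nu)/2}$ inside the big parenthesis to convert $\tfrac{D^{2}}{k}$ into $\tfrac{D^{2}}{8k}$; the two powers of $8$ collapse to give a prefactor of $8$, which multiplied by the $4$ from $2\cdot 2$ yields exactly the $32\bigl(\tfrac{D^{2}}{8k}\bigr)^{(1+\nu)/2}L_{\nu}$ displayed in the theorem. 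For the $\sigma$ term, $\frac{2D^{2}}{k}\cdot\frac{\sqrt{8k}}{D}$ equals $\frac{2D\sqrt{8k}}{k}=\frac{2D\cdot 8k}{k\sqrt{8k}}=\frac{16D}{\sqrt{8k}}$, which after the slight slack of the bound (or after noting $16\le 32$) matches $\tfrac{32D}{\sqrt{8k}}\sigma$. The $L_{0}$ term passes through untouched as $\tfrac{2D^{2}L_{0}}{k}$.

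The third step is purely cosmetic: collect the three pieces and state the final inequality. No additional probabilistic argument is needed because all expectations were taken term by term inside Lemma \ref{lemma3}, and the use of monotonicity to discard $\langle g(y)-g(w_i), w_i-y\rangle$ in \eqref{eq_5555stoc} is deterministic and applies path-by-path before taking expectation.

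The main obstacle has already been overcome, and it was Lemma \ref{lemma3}: handling the noise term $\delta_{k}=\|g(w_{k},\xi_{w_{k}})-g(w_{k})\|$ forced a case split (noise dominated vs.\ H\"older-deviation dominated), leading to two recursive inequalities of different degrees $p+1$ and $q+1$, which were then combined through the Minkowski-type splitting of Lemma \ref{lemma4} and a Jensen step $\mathbb{E}_{\xi}\bigl[\sqrt{\sum_{i}\delta_{i}^{2}}\bigr]\le\sigma\sqrt{k}$. Once that is in hand, the present theorem is essentially a one-line substitution, and the only thing to watch out for is keeping track of constants so that the final bound is indeed additive in the H\"older, stochastic, and initialization contributions.
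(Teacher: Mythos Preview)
Your proposal is correct and follows exactly the paper's own approach: the paper states the theorem as an immediate consequence of combining \eqref{eq_estim_gapstoc} with Lemma~\ref{lemma3}, and your substitution and constant bookkeeping reproduce the displayed bound (including your correct observation that the $\sigma$-term actually comes out as $16D/\sqrt{8k}$, so the stated $32D/\sqrt{8k}$ carries a harmless factor of two of slack).
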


\section{Numerical experiments}\label{sect:NumericalExperiments}
To compare the performance of Algorithm \ref{alg:StocUMP} (UMP) with other optimizers, let’s train a convolutional neural network \href{https://pytorch.org/vision/main/models/generated/torchvision.models.resnet50.html}{resnet18} implemented in the PyTorch library based on articles \cite{he2016deep} for image classification dataset \href{https://pytorch.org/ vision/main/generated/torchvision.datasets.CIFAR10.html}{CIFAR10} also provided by the PyTorch library.

CIFAR10 is a dataset of object samples from 10 classes (airplane, car, bird, cat, deer, dog, frog, horse, ship, and truck), containing 50,000 train and 10,000 test color pictures of size $32 \times 32$, each signed by the corresponding class.

\begin{figure}[htbp]
\begin{minipage}[h]{\linewidth}
\centering
\includegraphics[width=0.9\linewidth, height=7cm]{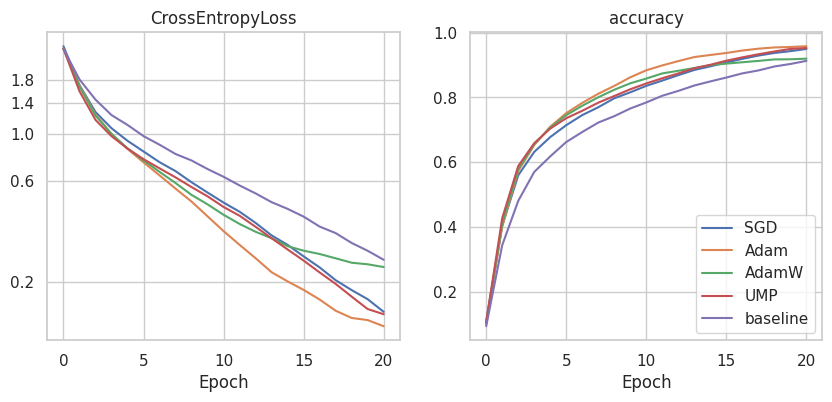}%
\captionsetup{justification=centering}
\caption{Comparison of the optimizers metrics for training when training resnet18 on the CIFAR10 dataset.}
\label{fig:MNIST}
\end{minipage}
\hfill
\end{figure}

\begin{figure}[htbp]
\begin{minipage}[h]{\linewidth}
\centering
\includegraphics[width=0.9\linewidth, height=7cm]{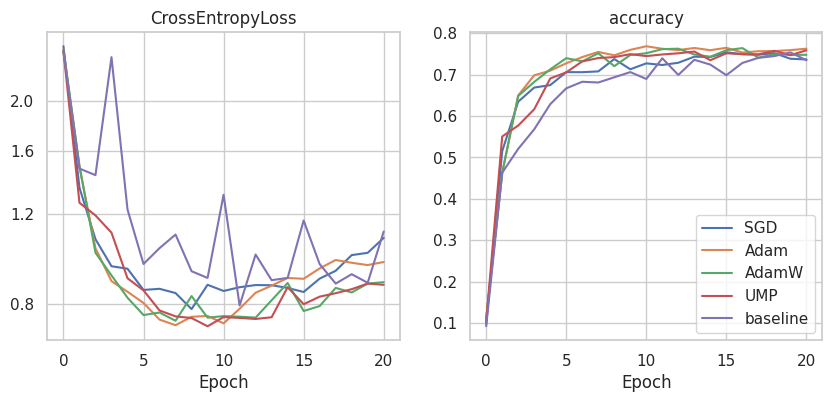}%
\captionsetup{justification=centering}
\caption{Comparison of the optimizers metrics for validation when training resnet18 on the CIFAR10 dataset.}
\label{fig:CIFAR10}
\end{minipage}
\hfill
\end{figure}

As optimizers, we consider SGD, Adam, AdamW, UMP (Algorithm \ref{alg:StocUMP}), and UMP without changing the variable $L$ as a baseline. We select the optimal learning rate for each optimizer and our method uses it as initialization for variable $L$.

The most important graph is the comparison of the optimizers losses for training when training resnet18 on the CIFAR10 dataset. It shows the speed of the CrossEntropyLoss minimization which is the problem that the optimizers solve in the experiment. We see that compared to the baseline, changing the variable $L$ in the UMP method improves the speed of the loss minimization considerably. We also see that the UMP achieves results similar to SGD for training and similar to Adam and AdamW for validation when training resnet18 on the CIFAR10 dataset.

\section{Conclusion}\label{sect:Conclusion}

In this paper, we show known improvements to the proximal mirror method \cite{nemirovski2004prox} and propose our universal version in both deterministic and stochastic settings of the problem, based on the ideas of universal gradient descent  \cite{rodomanov2024universal}. We prove the theoretical convergence of the proposed universal proximal mirror methods and obtain an estimate of the convergence rate in the deterministic case
$$
    O\left( \inf\limits_{\nu \in [0,1]} \left(\frac{L_{\nu}}{\varepsilon}\right)^{\frac{2}{1+\nu}} 2^{\frac{5-3\nu}{1+\nu}} D^2 \right).
$$

In the stochastic case, we obtain the following estimate for the  convergence rate
$$
    \mathbb{E}_{\xi} \left[\operatorname{Gap} \left(\frac{1}{k}\sum\limits_{i=0}^k w_i \right) \right] \le 32\left(\frac{D^2}{8k}\right)^{\frac{1+\nu}{2}} L_{\nu} + \frac{32 D}{\sqrt{8k}} \sigma + \frac{2D^2 L_0}{k}.
$$

We also compare the stochastic method's performance with popular optimizers on the classical minimization problem. The experiment shows that the proposed method achieves similar results for loss minimization.

In the future, we plan to conduct more experiments with other models and compare the performance of the universal proximal mirror methods with different methods for saddle-point problems, namely the problem of training generative adversarial networks. It is also planned to study the behavior of the proposed methods with restarts to accelerate its convergence rate for variational inequalities with strongly monotone operators. Generalize the proposed algorithms and the conducted analysis to be convenient with any norm (i.e., with an arbitrary prox function and Bregman divergence) not necessarily Euclidean. Also, we plan to investigate adaptive batch size strategies (see eg. \cite{lau2024adadagrad}) to solve the variational inequality problem in the stochastic setting for more general operators.

\subsection*{Acknowledgements}

We thank Abhishek for finding a mistake in Lemmas \ref{lemma2} and \ref{lemma3} which are proven for $\nu \in (0, 1)$, not $\nu \in [0, 1]$. This work was supported by a grant for research centers in the field of artificial intelligence, provided by the Analytical Center for the Government of the Russian Federation in accordance with the subsidy agreement (agreement identifier 000000D730324P540002) and the agreement with the Moscow Institute of Physics and Technology dated November 1, 2021 No. 70-2021-00138.

\bibliographystyle{cas-model2-names}

\bibliography{cas-refs}

\begin{thebibliography}{49}
\expandafter\ifx\csname natexlab\endcsname\relax\def\natexlab#1{#1}\fi
\providecommand{\url}[1]{\texttt{#1}}
\providecommand{\href}[2]{#2}
\providecommand{\path}[1]{#1}
\providecommand{\DOIprefix}{doi:}
\providecommand{\ArXivprefix}{arXiv:}
\providecommand{\URLprefix}{URL: }
\providecommand{\Pubmedprefix}{pmid:}
\providecommand{\doi}[1]{\href{http://dx.doi.org/#1}{\path{#1}}}
\providecommand{\Pubmed}[1]{\href{pmid:#1}{\path{#1}}}
\providecommand{\bibinfo}[2]{#2}
\ifx\xfnm\relax \def\xfnm[#1]{\unskip,\space#1}\fi
\bibitem[{Antonakopoulos et~al.(2020)Antonakopoulos, Belmega and Mertikopoulos}]{antonakopoulos2020adaptive}
\bibinfo{author}{Antonakopoulos, K.}, \bibinfo{author}{Belmega, E.V.}, \bibinfo{author}{Mertikopoulos, P.}, \bibinfo{year}{2020}.
\newblock \bibinfo{title}{Adaptive extra-gradient methods for min-max optimization and games}.
\newblock \bibinfo{journal}{arXiv preprint arXiv:2010.12100} .
\bibitem[{Arjovsky and Chintala(2017)}]{arjovsky2017bottou}
\bibinfo{author}{Arjovsky, M.}, \bibinfo{author}{Chintala, S.}, \bibinfo{year}{2017}.
\newblock \bibinfo{title}{Bottou. wasserstein gan}.
\newblock \bibinfo{journal}{arXiv preprint arXiv:1701.07875} \bibinfo{volume}{7}.
\bibitem[{Auslender and Teboulle(2005)}]{auslender2005interior}
\bibinfo{author}{Auslender, A.}, \bibinfo{author}{Teboulle, M.}, \bibinfo{year}{2005}.
\newblock \bibinfo{title}{Interior projection-like methods for monotone variational inequalities}.
\newblock \bibinfo{journal}{Mathematical programming} \bibinfo{volume}{104}, \bibinfo{pages}{39--68}.
\bibitem[{Bach et~al.(2012)Bach, Jenatton, Mairal, Obozinski et~al.}]{bach2012optimization}
\bibinfo{author}{Bach, F.}, \bibinfo{author}{Jenatton, R.}, \bibinfo{author}{Mairal, J.}, \bibinfo{author}{Obozinski, G.}, et~al., \bibinfo{year}{2012}.
\newblock \bibinfo{title}{Optimization with sparsity-inducing penalties}.
\newblock \bibinfo{journal}{Foundations and Trends{\textregistered} in Machine Learning} \bibinfo{volume}{4}, \bibinfo{pages}{1--106}.
\bibitem[{Bach and Levy(2019)}]{bach2019universal}
\bibinfo{author}{Bach, F.}, \bibinfo{author}{Levy, K.Y.}, \bibinfo{year}{2019}.
\newblock \bibinfo{title}{A universal algorithm for variational inequalities adaptive to smoothness and noise}, in: \bibinfo{booktitle}{Conference on learning theory}, \bibinfo{organization}{PMLR}. pp. \bibinfo{pages}{164--194}.
\bibitem[{Bach et~al.(2008)Bach, Mairal and Ponce}]{bach2008convex}
\bibinfo{author}{Bach, F.}, \bibinfo{author}{Mairal, J.}, \bibinfo{author}{Ponce, J.}, \bibinfo{year}{2008}.
\newblock \bibinfo{title}{Convex sparse matrix factorizations}.
\newblock \bibinfo{journal}{arXiv preprint arXiv:0812.1869} .
\bibitem[{Baimurzina et~al.(2019)Baimurzina, Gasnikov, Gasnikova, Dvurechensky, Ershov, Kubentaeva and Lagunovskaya}]{baimurzina2019universal}
\bibinfo{author}{Baimurzina, D.}, \bibinfo{author}{Gasnikov, A.}, \bibinfo{author}{Gasnikova, E.}, \bibinfo{author}{Dvurechensky, P.}, \bibinfo{author}{Ershov, E.}, \bibinfo{author}{Kubentaeva, M.}, \bibinfo{author}{Lagunovskaya, A.}, \bibinfo{year}{2019}.
\newblock \bibinfo{title}{Universal method of searching for equilibria and stochastic equilibria in transportation networks}.
\newblock \bibinfo{journal}{Computational Mathematics and Mathematical Physics} \bibinfo{volume}{59}, \bibinfo{pages}{19--33}.
\bibitem[{Beznosikov et~al.(2023)Beznosikov, Polyak, Gorbunov, Kovalev and Gasnikov}]{beznosikov2023smooth}
\bibinfo{author}{Beznosikov, A.}, \bibinfo{author}{Polyak, B.}, \bibinfo{author}{Gorbunov, E.}, \bibinfo{author}{Kovalev, D.}, \bibinfo{author}{Gasnikov, A.}, \bibinfo{year}{2023}.
\newblock \bibinfo{title}{Smooth monotone stochastic variational inequalities and saddle point problems: A survey}.
\newblock \bibinfo{journal}{European Mathematical Society Magazine} , \bibinfo{pages}{15--28}.
\bibitem[{Dang and Lan(2015)}]{dang2015convergence}
\bibinfo{author}{Dang, C.D.}, \bibinfo{author}{Lan, G.}, \bibinfo{year}{2015}.
\newblock \bibinfo{title}{On the convergence properties of non-euclidean extragradient methods for variational inequalities with generalized monotone operators}.
\newblock \bibinfo{journal}{Computational Optimization and applications} \bibinfo{volume}{60}, \bibinfo{pages}{277--310}.
\bibitem[{Daskalakis et~al.(2017)Daskalakis, Ilyas, Syrgkanis and Zeng}]{daskalakis2017training}
\bibinfo{author}{Daskalakis, C.}, \bibinfo{author}{Ilyas, A.}, \bibinfo{author}{Syrgkanis, V.}, \bibinfo{author}{Zeng, H.}, \bibinfo{year}{2017}.
\newblock \bibinfo{title}{Training gans with optimism}.
\newblock \bibinfo{journal}{arXiv preprint arXiv:1711.00141} .
\bibitem[{Elliott(1987)}]{elliott1987variational}
\bibinfo{author}{Elliott, C.}, \bibinfo{year}{1987}.
\newblock \bibinfo{title}{Variational and quasivariational inequalities applications to free--boundary problems.(claudio baiocchi and antonio capelo)}.
\newblock \bibinfo{journal}{SIAM Review} \bibinfo{volume}{29}, \bibinfo{pages}{314}.
\bibitem[{Esser et~al.(2010)Esser, Zhang and Chan}]{esser2010general}
\bibinfo{author}{Esser, E.}, \bibinfo{author}{Zhang, X.}, \bibinfo{author}{Chan, T.F.}, \bibinfo{year}{2010}.
\newblock \bibinfo{title}{A general framework for a class of first order primal-dual algorithms for convex optimization in imaging science}.
\newblock \bibinfo{journal}{SIAM Journal on Imaging Sciences} \bibinfo{volume}{3}, \bibinfo{pages}{1015--1046}.
\bibitem[{Facchinei and Pang(2003)}]{facchinei2003finite}
\bibinfo{author}{Facchinei, F.}, \bibinfo{author}{Pang, J.S.}, \bibinfo{year}{2003}.
\newblock \bibinfo{title}{Finite-dimensional variational inequalities and complementarity problems}.
\newblock \bibinfo{publisher}{Springer}.
\bibitem[{Gasnikov et~al.(2019)Gasnikov, Dvurechensky, Stonyakin and Titov}]{gasnikov2019adaptive}
\bibinfo{author}{Gasnikov, A.V.}, \bibinfo{author}{Dvurechensky, P.E.}, \bibinfo{author}{Stonyakin, F.S.}, \bibinfo{author}{Titov, A.A.}, \bibinfo{year}{2019}.
\newblock \bibinfo{title}{An adaptive proximal method for variational inequalities}.
\newblock \bibinfo{journal}{Computational Mathematics and Mathematical Physics} \bibinfo{volume}{59}, \bibinfo{pages}{836--841}.
\bibitem[{Gasnikov and Nesterov(2018)}]{gasnikov2018universal}
\bibinfo{author}{Gasnikov, A.V.}, \bibinfo{author}{Nesterov, Y.E.}, \bibinfo{year}{2018}.
\newblock \bibinfo{title}{Universal method for stochastic composite optimization problems}.
\newblock \bibinfo{journal}{Computational Mathematics and Mathematical Physics} \bibinfo{volume}{58}, \bibinfo{pages}{48--64}.
\bibitem[{Ghadimi et~al.(2019)Ghadimi, Lan and Zhang}]{ghadimi2019generalized}
\bibinfo{author}{Ghadimi, S.}, \bibinfo{author}{Lan, G.}, \bibinfo{author}{Zhang, H.}, \bibinfo{year}{2019}.
\newblock \bibinfo{title}{Generalized uniformly optimal methods for nonlinear programming}.
\newblock \bibinfo{journal}{Journal of Scientific Computing} \bibinfo{volume}{79}, \bibinfo{pages}{1854--1881}.
\bibitem[{Giannessi(1998)}]{giannessi1998minty}
\bibinfo{author}{Giannessi, F.}, \bibinfo{year}{1998}.
\newblock \bibinfo{title}{On minty variational principle}.
\newblock \bibinfo{journal}{New Trends in Mathematical Programming: Homage to Steven Vajda} , \bibinfo{pages}{93--99}.
\bibitem[{Gidel et~al.(2018)Gidel, Berard, Vignoud, Vincent and Lacoste-Julien}]{gidel2018variational}
\bibinfo{author}{Gidel, G.}, \bibinfo{author}{Berard, H.}, \bibinfo{author}{Vignoud, G.}, \bibinfo{author}{Vincent, P.}, \bibinfo{author}{Lacoste-Julien, S.}, \bibinfo{year}{2018}.
\newblock \bibinfo{title}{A variational inequality perspective on generative adversarial networks}.
\newblock \bibinfo{journal}{arXiv preprint arXiv:1802.10551} .
\bibitem[{Goodfellow et~al.(2014)Goodfellow, Pouget-Abadie, Mirza, Xu, Warde-Farley, Ozair, Courville and Bengio}]{goodfellow2014generative}
\bibinfo{author}{Goodfellow, I.}, \bibinfo{author}{Pouget-Abadie, J.}, \bibinfo{author}{Mirza, M.}, \bibinfo{author}{Xu, B.}, \bibinfo{author}{Warde-Farley, D.}, \bibinfo{author}{Ozair, S.}, \bibinfo{author}{Courville, A.}, \bibinfo{author}{Bengio, Y.}, \bibinfo{year}{2014}.
\newblock \bibinfo{title}{Generative adversarial nets}.
\newblock \bibinfo{journal}{Advances in neural information processing systems} \bibinfo{volume}{27}.
\bibitem[{Goodfellow et~al.(2020)Goodfellow, Pouget-Abadie, Mirza, Xu, Warde-Farley, Ozair, Courville and Bengio}]{goodfellow2020generative}
\bibinfo{author}{Goodfellow, I.}, \bibinfo{author}{Pouget-Abadie, J.}, \bibinfo{author}{Mirza, M.}, \bibinfo{author}{Xu, B.}, \bibinfo{author}{Warde-Farley, D.}, \bibinfo{author}{Ozair, S.}, \bibinfo{author}{Courville, A.}, \bibinfo{author}{Bengio, Y.}, \bibinfo{year}{2020}.
\newblock \bibinfo{title}{Generative adversarial networks}.
\newblock \bibinfo{journal}{Communications of the ACM} \bibinfo{volume}{63}, \bibinfo{pages}{139--144}.
\bibitem[{Harker and Pang(1990)}]{harker1990finite}
\bibinfo{author}{Harker, P.T.}, \bibinfo{author}{Pang, J.S.}, \bibinfo{year}{1990}.
\newblock \bibinfo{title}{Finite-dimensional variational inequality and nonlinear complementarity problems: a survey of theory, algorithms and applications}.
\newblock \bibinfo{journal}{Mathematical programming} \bibinfo{volume}{48}, \bibinfo{pages}{161--220}.
\bibitem[{He et~al.(2016)He, Zhang, Ren and Sun}]{he2016deep}
\bibinfo{author}{He, K.}, \bibinfo{author}{Zhang, X.}, \bibinfo{author}{Ren, S.}, \bibinfo{author}{Sun, J.}, \bibinfo{year}{2016}.
\newblock \bibinfo{title}{Deep residual learning for image recognition}, in: \bibinfo{booktitle}{Proceedings of the IEEE conference on computer vision and pattern recognition}, pp. \bibinfo{pages}{770--778}.
\bibitem[{Hsieh et~al.(2019)Hsieh, Iutzeler, Malick and Mertikopoulos}]{hsieh2019convergence}
\bibinfo{author}{Hsieh, Y.G.}, \bibinfo{author}{Iutzeler, F.}, \bibinfo{author}{Malick, J.}, \bibinfo{author}{Mertikopoulos, P.}, \bibinfo{year}{2019}.
\newblock \bibinfo{title}{On the convergence of single-call stochastic extra-gradient methods}.
\newblock \bibinfo{journal}{Advances in Neural Information Processing Systems} \bibinfo{volume}{32}.
\bibitem[{Iusem et~al.(2019)Iusem, Jofr{\'e}, Oliveira and Thompson}]{iusem2019variance}
\bibinfo{author}{Iusem, A.N.}, \bibinfo{author}{Jofr{\'e}, A.}, \bibinfo{author}{Oliveira, R.I.}, \bibinfo{author}{Thompson, P.}, \bibinfo{year}{2019}.
\newblock \bibinfo{title}{Variance-based extragradient methods with line search for stochastic variational inequalities}.
\newblock \bibinfo{journal}{SIAM Journal on Optimization} \bibinfo{volume}{29}, \bibinfo{pages}{175--206}.
\bibitem[{Jin and Sidford(2020)}]{jin2020efficiently}
\bibinfo{author}{Jin, Y.}, \bibinfo{author}{Sidford, A.}, \bibinfo{year}{2020}.
\newblock \bibinfo{title}{Efficiently solving mdps with stochastic mirror descent}, in: \bibinfo{booktitle}{International Conference on Machine Learning}, \bibinfo{organization}{PMLR}. pp. \bibinfo{pages}{4890--4900}.
\bibitem[{Joachims(2005)}]{joachims2005support}
\bibinfo{author}{Joachims, T.}, \bibinfo{year}{2005}.
\newblock \bibinfo{title}{A support vector method for multivariate performance measures}, in: \bibinfo{booktitle}{Proceedings of the 22nd international conference on Machine learning}, pp. \bibinfo{pages}{377--384}.
\bibitem[{Juditsky et~al.(2011)Juditsky, Nemirovski and Tauvel}]{juditsky2011solving}
\bibinfo{author}{Juditsky, A.}, \bibinfo{author}{Nemirovski, A.}, \bibinfo{author}{Tauvel, C.}, \bibinfo{year}{2011}.
\newblock \bibinfo{title}{Solving variational inequalities with stochastic mirror-prox algorithm}.
\newblock \bibinfo{journal}{Stochastic Systems} \bibinfo{volume}{1}, \bibinfo{pages}{17--58}.
\bibitem[{Kamzolov et~al.(2021)Kamzolov, Dvurechensky and Gasnikov}]{kamzolov2021universal}
\bibinfo{author}{Kamzolov, D.}, \bibinfo{author}{Dvurechensky, P.}, \bibinfo{author}{Gasnikov, A.V.}, \bibinfo{year}{2021}.
\newblock \bibinfo{title}{Universal intermediate gradient method for convex problems with inexact oracle}.
\newblock \bibinfo{journal}{Optimization Methods and Software} \bibinfo{volume}{36}, \bibinfo{pages}{1289--1316}.
\bibitem[{Kniaz et~al.(2021)Kniaz, Knyaz, Mizginov, Papazyan, Fomin and Grodzitsky}]{kniaz2021adversarial}
\bibinfo{author}{Kniaz, V.V.}, \bibinfo{author}{Knyaz, V.A.}, \bibinfo{author}{Mizginov, V.}, \bibinfo{author}{Papazyan, A.}, \bibinfo{author}{Fomin, N.}, \bibinfo{author}{Grodzitsky, L.}, \bibinfo{year}{2021}.
\newblock \bibinfo{title}{Adversarial dataset augmentation using reinforcement learning and 3d modeling}, in: \bibinfo{booktitle}{Advances in Neural Computation, Machine Learning, and Cognitive Research IV: Selected Papers from the XXII International Conference on Neuroinformatics, October 12-16, 2020, Moscow, Russia}, \bibinfo{organization}{Springer}. pp. \bibinfo{pages}{316--329}.
\bibitem[{Konnov(2007)}]{konnov2007equilibrium}
\bibinfo{author}{Konnov, I.}, \bibinfo{year}{2007}.
\newblock \bibinfo{title}{Equilibrium models and variational inequalities}.
\newblock \bibinfo{publisher}{Elsevier}.
\bibitem[{Korpelevich(1976)}]{korpelevich1976extragradient}
\bibinfo{author}{Korpelevich, G.M.}, \bibinfo{year}{1976}.
\newblock \bibinfo{title}{The extragradient method for finding saddle points and other problems}.
\newblock \bibinfo{journal}{Matecon} \bibinfo{volume}{12}, \bibinfo{pages}{747--756}.
\bibitem[{Lau et~al.(2024)Lau, Liu and Kolar}]{lau2024adadagrad}
\bibinfo{author}{Lau, T.T.K.}, \bibinfo{author}{Liu, H.}, \bibinfo{author}{Kolar, M.}, \bibinfo{year}{2024}.
\newblock \bibinfo{title}{Adadagrad: Adaptive batch size schemes for adaptive gradient methods}.
\newblock \bibinfo{journal}{arXiv preprint arXiv:2402.11215} .
\bibitem[{Madry et~al.(2017)Madry, Makelov, Schmidt, Tsipras and Vladu}]{madry2017towards}
\bibinfo{author}{Madry, A.}, \bibinfo{author}{Makelov, A.}, \bibinfo{author}{Schmidt, L.}, \bibinfo{author}{Tsipras, D.}, \bibinfo{author}{Vladu, A.}, \bibinfo{year}{2017}.
\newblock \bibinfo{title}{Towards deep learning models resistant to adversarial attacks}.
\newblock \bibinfo{journal}{arXiv preprint arXiv:1706.06083} .
\bibitem[{Malitsky and Tam(2020)}]{malitsky2020forward}
\bibinfo{author}{Malitsky, Y.}, \bibinfo{author}{Tam, M.K.}, \bibinfo{year}{2020}.
\newblock \bibinfo{title}{A forward-backward splitting method for monotone inclusions without cocoercivity}.
\newblock \bibinfo{journal}{SIAM Journal on Optimization} \bibinfo{volume}{30}, \bibinfo{pages}{1451--1472}.
\bibitem[{Mishchenko et~al.(2020)Mishchenko, Kovalev, Shulgin, Richt{\'a}rik and Malitsky}]{mishchenko2020revisiting}
\bibinfo{author}{Mishchenko, K.}, \bibinfo{author}{Kovalev, D.}, \bibinfo{author}{Shulgin, E.}, \bibinfo{author}{Richt{\'a}rik, P.}, \bibinfo{author}{Malitsky, Y.}, \bibinfo{year}{2020}.
\newblock \bibinfo{title}{Revisiting stochastic extragradient}, in: \bibinfo{booktitle}{International Conference on Artificial Intelligence and Statistics}, \bibinfo{organization}{PMLR}. pp. \bibinfo{pages}{4573--4582}.
\bibitem[{Monteiro and Svaiter(2010)}]{monteiro2010complexity}
\bibinfo{author}{Monteiro, R.D.}, \bibinfo{author}{Svaiter, B.F.}, \bibinfo{year}{2010}.
\newblock \bibinfo{title}{On the complexity of the hybrid proximal extragradient method for the iterates and the ergodic mean}.
\newblock \bibinfo{journal}{SIAM Journal on Optimization} \bibinfo{volume}{20}, \bibinfo{pages}{2755--2787}.
\bibitem[{Nagurney(1993)}]{nagurney1993network}
\bibinfo{author}{Nagurney, A.}, \bibinfo{year}{1993}.
\newblock \bibinfo{title}{Network Economics: A Variational Inequality Approach}.
\newblock \bibinfo{publisher}{Springer Science+Business Media New York}.
\bibitem[{Nemirovski(2004)}]{nemirovski2004prox}
\bibinfo{author}{Nemirovski, A.}, \bibinfo{year}{2004}.
\newblock \bibinfo{title}{Prox-method with rate of convergence o (1/t) for variational inequalities with lipschitz continuous monotone operators and smooth convex-concave saddle point problems}.
\newblock \bibinfo{journal}{SIAM Journal on Optimization} \bibinfo{volume}{15}, \bibinfo{pages}{229--251}.
\bibitem[{Nemirovskii and Yudin(1983)}]{nemirovskij1983problem}
\bibinfo{author}{Nemirovskii, A.S.}, \bibinfo{author}{Yudin, D.B.}, \bibinfo{year}{1983}.
\newblock \bibinfo{title}{Problem complexity and method efficiency in optimization} .
\bibitem[{Nesterov(2005)}]{nesterov2005smooth}
\bibinfo{author}{Nesterov, Y.}, \bibinfo{year}{2005}.
\newblock \bibinfo{title}{Smooth minimization of non-smooth functions}.
\newblock \bibinfo{journal}{Mathematical programming} \bibinfo{volume}{103}, \bibinfo{pages}{127--152}.
\bibitem[{Nesterov(2007)}]{nesterov2007dual}
\bibinfo{author}{Nesterov, Y.}, \bibinfo{year}{2007}.
\newblock \bibinfo{title}{Dual extrapolation and its applications to solving variational inequalities and related problems}.
\newblock \bibinfo{journal}{Mathematical Programming} \bibinfo{volume}{109}, \bibinfo{pages}{319--344}.
\bibitem[{Nesterov(2015)}]{nesterov2015universal}
\bibinfo{author}{Nesterov, Y.}, \bibinfo{year}{2015}.
\newblock \bibinfo{title}{Universal gradient methods for convex optimization problems}.
\newblock \bibinfo{journal}{Mathematical Programming} \bibinfo{volume}{152}, \bibinfo{pages}{381--404}.
\bibitem[{Omidshafiei et~al.(2017)Omidshafiei, Pazis, Amato, How and Vian}]{omidshafiei2017deep}
\bibinfo{author}{Omidshafiei, S.}, \bibinfo{author}{Pazis, J.}, \bibinfo{author}{Amato, C.}, \bibinfo{author}{How, J.P.}, \bibinfo{author}{Vian, J.}, \bibinfo{year}{2017}.
\newblock \bibinfo{title}{Deep decentralized multi-task multi-agent reinforcement learning under partial observability}, in: \bibinfo{booktitle}{International Conference on Machine Learning}, \bibinfo{organization}{PMLR}. pp. \bibinfo{pages}{2681--2690}.
\bibitem[{Ouyang and Xu(2021)}]{ouyang2021lower}
\bibinfo{author}{Ouyang, Y.}, \bibinfo{author}{Xu, Y.}, \bibinfo{year}{2021}.
\newblock \bibinfo{title}{Lower complexity bounds of first-order methods for convex-concave bilinear saddle-point problems}.
\newblock \bibinfo{journal}{Mathematical Programming} \bibinfo{volume}{185}, \bibinfo{pages}{1--35}.
\bibitem[{Rodomanov et~al.(2024)Rodomanov, Kavis, Wu, Antonakopoulos and Cevher}]{rodomanov2024universal}
\bibinfo{author}{Rodomanov, A.}, \bibinfo{author}{Kavis, A.}, \bibinfo{author}{Wu, Y.}, \bibinfo{author}{Antonakopoulos, K.}, \bibinfo{author}{Cevher, V.}, \bibinfo{year}{2024}.
\newblock \bibinfo{title}{Universal gradient methods for stochastic convex optimization}.
\newblock \bibinfo{journal}{arXiv preprint arXiv:2402.03210} .
\bibitem[{Solodov and Svaiter(1999)}]{solodov1999hybrid}
\bibinfo{author}{Solodov, M.V.}, \bibinfo{author}{Svaiter, B.F.}, \bibinfo{year}{1999}.
\newblock \bibinfo{title}{A hybrid approximate extragradient--proximal point algorithm using the enlargement of a maximal monotone operator}.
\newblock \bibinfo{journal}{Set-Valued Analysis} \bibinfo{volume}{7}, \bibinfo{pages}{323--345}.
\bibitem[{Stonyakin et~al.(2022)Stonyakin, Gasnikov, Dvurechensky, Titov and Alkousa}]{stonyakin2022generalized}
\bibinfo{author}{Stonyakin, F.}, \bibinfo{author}{Gasnikov, A.}, \bibinfo{author}{Dvurechensky, P.}, \bibinfo{author}{Titov, A.}, \bibinfo{author}{Alkousa, M.}, \bibinfo{year}{2022}.
\newblock \bibinfo{title}{Generalized mirror prox algorithm for monotone variational inequalities: Universality and inexact oracle}.
\newblock \bibinfo{journal}{Journal of Optimization Theory and Applications} \bibinfo{volume}{194}, \bibinfo{pages}{988--1013}.
\bibitem[{Stonyakin et~al.(2021)Stonyakin, Tyurin, Gasnikov, Dvurechensky, Agafonov, Dvinskikh, Alkousa, Pasechnyuk, Artamonov and Piskunova}]{stonyakin2021inexact}
\bibinfo{author}{Stonyakin, F.}, \bibinfo{author}{Tyurin, A.}, \bibinfo{author}{Gasnikov, A.}, \bibinfo{author}{Dvurechensky, P.}, \bibinfo{author}{Agafonov, A.}, \bibinfo{author}{Dvinskikh, D.}, \bibinfo{author}{Alkousa, M.}, \bibinfo{author}{Pasechnyuk, D.}, \bibinfo{author}{Artamonov, S.}, \bibinfo{author}{Piskunova, V.}, \bibinfo{year}{2021}.
\newblock \bibinfo{title}{Inexact model: A framework for optimization and variational inequalities}.
\newblock \bibinfo{journal}{Optimization Methods and Software} \bibinfo{volume}{36}, \bibinfo{pages}{1155--1201}.
\bibitem[{Xu et~al.(2004)Xu, Neufeld, Larson and Schuurmans}]{xu2004maximum}
\bibinfo{author}{Xu, L.}, \bibinfo{author}{Neufeld, J.}, \bibinfo{author}{Larson, B.}, \bibinfo{author}{Schuurmans, D.}, \bibinfo{year}{2004}.
\newblock \bibinfo{title}{Maximum margin clustering}.
\newblock \bibinfo{journal}{Advances in neural information processing systems} \bibinfo{volume}{17}.

\end{thebibliography}

\end{document}